\newcommand{\R}{\mathbb{R}}
\newcommand{\Z}{\mathbb{Z}}
\renewcommand*\env@matrix[1][*\c@MaxMatrixCols c]{%
  \hskip -\arraycolsep
  \let\@ifnextchar\new@ifnextchar
  \array{#1}}
\theoremstyle{definition}
\newtheorem{defn}{Definition}[section]
\theoremstyle{plain}
\newtheorem{thm}[defn]{Theorem}
\newtheorem{prop}[defn]{Proposition}
\newtheorem{lem}[defn]{Lemma}
\theoremstyle{remark}
\title{LLL Algorithm for Lattice Basis Reduction}
\author{Alex Kalbach, Ted Chinburg}
\date{\vspace{-5ex}}
\begin{document}
\maketitle
\vspace{8pt}
{\footnotesize This is an expository paper intended to introduce the polynomial time lattice basis reduction algorithm first described by Arjen Lenstra, Hendrik Lenstra, and László Lovász in 1982. We begin by introducing the shortest vector problem, which motivates the underlying components of the LLL algorithm. Then, we introduce the details of the algorithm itself, followed by proofs of the correctness and runtime of the algorithm in complete detail, assuming only a basic linear algebra background and an understanding of big O notation. Finally, we apply the LLL algorithm to the shortest vector problem and explore other applications of the algorithm in various mathematical settings.}

\section{Shortest vector problem}
\begin{defn}
\sloppy For any ordered set of linearly independent vectors $B = \{\vec{b}_1, \vec{b}_2, \ldots, \vec{b}_n\}$ in $\R^k$, we can define the \textit{lattice} of $B$ as the set
\[
\Lambda(B) = \left\{v \, : \, v = c_1\vec{b}_1 + c_2\vec{b}_2 + \cdots + c_n\vec{b}_n,\, c_i \in \Z \right\}.
\]
\end{defn}
We say that the set $B$ is the \textit{basis} of the rank $n$ lattice $\Lambda(B)$. For example, the lattice with basis
\[
\left\{\begin{pmatrix}0\\ 1
\end{pmatrix}, \begin{pmatrix}1\\0
\end{pmatrix}
\right\}
\]

is the set of all vectors in $\Z^2$. Note that the number of vectors in the basis, $n$, is not necessarily equal to the dimension of each vector, $k$, but it must be that $n \leq k$ in order for $B$ to be a linearly independent set.

The shortest vector problem (SVP) asks one to find, given any lattice $\Lambda(B)$, the shortest non-zero vector contained in $\Lambda(B)$. For simple lattices like the one above, it is clear that the shortest vector is one of the given basis vectors, with length 1. But, for more complicated lattices, like one with basis
\[
\left\{\begin{pmatrix}-2\\3\\1\\2
\end{pmatrix}, \begin{pmatrix}3\\-1\\1\\-2
\end{pmatrix}, \begin{pmatrix}-6\\-5\\-6\\2
\end{pmatrix}
\right\},
\]
it is not as obvious that a shortest vector is
\[
3\vec{b}_1 + 4\vec{b}_2 + \vec{b}_3 = \begin{pmatrix}0\\0\\1\\0
\end{pmatrix},
\] also of length $1$.

It is believed to be an NP-hard problem to find a vector in a lattice with length within a factor of $\sqrt{2}$ of the length of the shortest vector, let alone finding the shortest vector itself [1]. This would mean that if there was a polynomial time algorithm that could solve the SVP, even to just a $\sqrt{2}$-factor approximation, we could then solve any NP problem (a problem which has a proof verifiable in polynomial time) in polynomial time.

It is for reasons such as this that lattices are used as the core of certain encryption algorithms. These algorithms rely on both the ease of constructing a lattice of vectors used for encryption, and the difficulty of obtaining the shortest vectors of a given lattice, which would aid in decryption. Applications of lattices in cryptography and other areas of mathematics will be briefly mentioned in section 6.

\section{Motivation for the LLL algorithm}

The Lenstra-Lenstra-Lovàsz (LLL) algorithm aims to iteratively reduce vectors in a basis for a lattice in order to find short vectors in that lattice. As later shown in Theorem 5.2, for an $n$-dimensional lattice, the reduced basis will contain a vector that is within an exponential factor in $n$ of being the shortest vector. The exposition of the LLL algorithm in this paper is largely based on the description given in the notes of Micciancio in [2], which are in turn based on the original paper by Lenstra, Lenstra, and Lovasz [3]. We have also benefitted from the descriptions of the LLL algorithm given by Deng [4] and Etienne [5].

The core idea behind the algorithm is to take a basis of a lattice and repeatedly change the vectors in that basis to make them consistently smaller, while still remaining within the lattice. An important step of this is reducing the projection coefficients between pairs of vectors. The intuition behind why we want to do this is as follows:

\begin{prop}
Given two vectors $\vec{u}, \vec{v} \in  \Lambda(B)$, we can define the projection coefficient
\[\mu = \frac{\langle\vec{u}, \vec{v}\rangle}{\langle\vec{v}, \vec{v}\rangle}.\] 
Then, we have that
\[\left\Vert\vec{u} - \lfloor\mu\rceil\vec{v}\right\Vert \leq \Vert\vec{u}\Vert,\]
where $\lfloor x \rceil$ denotes the nearest integer to $x$. If there are two integers equally close, the one with lesser absolute value is selected (e.g. 1/2 and -1/2 round to 0).
\end{prop}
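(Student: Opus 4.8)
The plan is to reduce the claimed inequality to an elementary statement about the real quadratic $c \mapsto \Vert \vec{u} - c\vec{v}\Vert^2$ and then verify it for the particular integer $c = \lfloor\mu\rceil$. Since both sides of the desired inequality are nonnegative, it is equivalent to prove $\Vert \vec{u} - \lfloor\mu\rceil\vec{v}\Vert^2 \le \Vert\vec{u}\Vert^2$. First I would expand, using bilinearity of the inner product,
\[
\Vert \vec{u} - c\vec{v}\Vert^2 = \Vert\vec{u}\Vert^2 - 2c\langle\vec{u},\vec{v}\rangle + c^2\langle\vec{v},\vec{v}\rangle,
\]
and substitute $\langle\vec{u},\vec{v}\rangle = \mu\langle\vec{v},\vec{v}\rangle$, which is immediate from the definition of $\mu$ (note $\langle\vec{v},\vec{v}\rangle > 0$, since $\vec{v}$ must be nonzero for $\mu$ to be defined at all). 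After cancelling $\Vert\vec{u}\Vert^2$ from both sides and dividing by the positive quantity $\langle\vec{v},\vec{v}\rangle$, the target inequality becomes the purely scalar claim $c^2 - 2c\mu \le 0$ with $c = \lfloor\mu\rceil$.

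Next I would write $c = \lfloor\mu\rceil = \mu + \delta$, where $\delta$ denotes the signed rounding error, so that $\lvert\delta\rvert \le 1/2$. A one-line computation gives $c^2 - 2c\mu = c(c - 2\mu) = (\mu + \delta)(\delta - \mu) = \delta^2 - \mu^2$, so the scalar claim is equivalent to $\delta^2 \le \mu^2$, i.e.\ $\lvert\delta\rvert \le \lvert\mu\rvert$. This is the one place where the precise definition of $\lfloor\cdot\rceil$ — including its tie-breaking convention — genuinely enters, and it is the main (if still modest) obstacle: that the rounding error is at most $1/2$ is not by itself enough when $\mu$ is very small. I would therefore split into two cases. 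If $\lvert\mu\rvert \ge 1/2$, then $\lvert\delta\rvert \le 1/2 \le \lvert\mu\rvert$ and we are done. If $\lvert\mu\rvert \le 1/2$, then the rounding rule — in particular rounding toward $0$ in the tie case $\mu = \pm 1/2$ — forces $\lfloor\mu\rceil = 0$, hence $\delta = -\mu$ and $\lvert\delta\rvert = \lvert\mu\rvert$.

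Combining the two cases yields $\delta^2 - \mu^2 \le 0$ always, hence $\Vert\vec{u} - \lfloor\mu\rceil\vec{v}\Vert^2 \le \Vert\vec{u}\Vert^2$, and taking square roots gives the proposition. I would close with a remark that the same expansion shows the length-minimizing real coefficient is $c = \mu$ itself, which produces the component of $\vec{u}$ orthogonal to $\vec{v}$; passing to the nearest integer $\lfloor\mu\rceil$ is exactly the concession needed to keep the new vector inside the lattice, and the computation above shows this concession never increases the length.
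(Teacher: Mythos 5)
Your argument is correct and is essentially the same as the paper's: both reduce the claim to the scalar inequality $\lvert\mu - \lfloor\mu\rceil\rvert \le \lvert\mu\rvert$ and handle it by the same case split on whether $\lvert\mu\rvert \le 1/2$ (where rounding gives $\lfloor\mu\rceil = 0$ and equality) or $\lvert\mu\rvert > 1/2$ (where the rounding error is at most $1/2 < \lvert\mu\rvert$). The only cosmetic difference is that the paper reaches this via the orthogonal decomposition $\vec{u} = \mu\vec{v} + \vec{v}^*$ and the Pythagorean theorem, while you expand $\Vert\vec{u} - c\vec{v}\Vert^2$ directly by bilinearity; the underlying computation is identical.
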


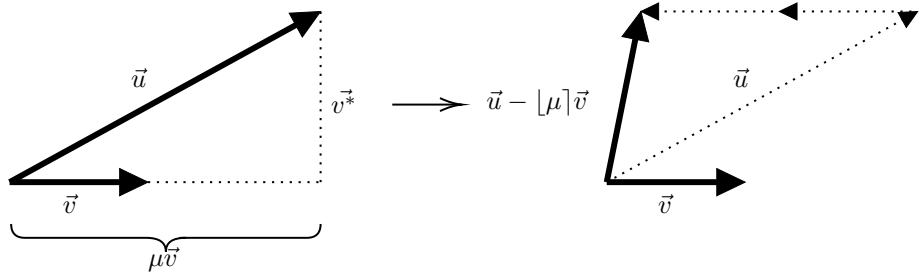
\begin{figure}[H]
\centering
\tikzset{every picture/.style={line width=0.75pt}} 

\begin{tikzpicture}[x=0.75pt,y=0.75pt,yscale=-1,xscale=1]

\draw [line width=2.25]    (101,235.15) -- (165.8,235.15) ;
\draw [shift={(170.8,235.15)}, rotate = 180] [fill={rgb, 255:red, 0; green, 0; blue, 0 }  ][line width=0.08]  [draw opacity=0] (14.29,-6.86) -- (0,0) -- (14.29,6.86) -- cycle    ;
\draw [line width=2.25]    (101,235.15) -- (253.47,151.41) ;
\draw [shift={(257.85,149)}, rotate = 151.22] [fill={rgb, 255:red, 0; green, 0; blue, 0 }  ][line width=0.08]  [draw opacity=0] (14.29,-6.86) -- (0,0) -- (14.29,6.86) -- cycle    ;
\draw  [dash pattern={on 0.84pt off 2.51pt}]  (170.8,235.15) -- (257.85,235.15) ;
\draw  [dash pattern={on 0.84pt off 2.51pt}]  (257.85,235.15) -- (257.85,149) ;
\draw   (101.78,256.25) .. controls (101.78,260.92) and (104.11,263.25) .. (108.78,263.25) -- (169.82,263.25) .. controls (176.49,263.25) and (179.82,265.58) .. (179.82,270.25) .. controls (179.82,265.58) and (183.15,263.25) .. (189.82,263.25)(186.82,263.25) -- (250.85,263.25) .. controls (255.52,263.25) and (257.85,260.92) .. (257.85,256.25) ;
\draw [line width=2.25]    (402.15,235.15) -- (466.95,235.15) ;
\draw [shift={(471.95,235.15)}, rotate = 180] [fill={rgb, 255:red, 0; green, 0; blue, 0 }  ][line width=0.08]  [draw opacity=0] (14.29,-6.86) -- (0,0) -- (14.29,6.86) -- cycle    ;
\draw [line width=0.75]  [dash pattern={on 0.84pt off 2.51pt}]  (402.15,235.15) -- (556.37,150.44) ;
\draw [shift={(559,149)}, rotate = 151.22] [fill={rgb, 255:red, 0; green, 0; blue, 0 }  ][line width=0.08]  [draw opacity=0] (8.93,-4.29) -- (0,0) -- (8.93,4.29) -- cycle    ;
\draw [line width=0.75]  [dash pattern={on 0.84pt off 2.51pt}]  (492.2,149) -- (559,149) ;
\draw [shift={(489.2,149)}, rotate = 0] [fill={rgb, 255:red, 0; green, 0; blue, 0 }  ][line width=0.08]  [draw opacity=0] (8.93,-4.29) -- (0,0) -- (8.93,4.29) -- cycle    ;
\draw [line width=0.75]  [dash pattern={on 0.84pt off 2.51pt}]  (422.4,149) -- (489.2,149) ;
\draw [shift={(419.4,149)}, rotate = 0] [fill={rgb, 255:red, 0; green, 0; blue, 0 }  ][line width=0.08]  [draw opacity=0] (8.93,-4.29) -- (0,0) -- (8.93,4.29) -- cycle    ;
\draw [line width=2.25]    (402.15,235.15) -- (418.42,153.9) ;
\draw [shift={(419.4,149)}, rotate = 101.32] [fill={rgb, 255:red, 0; green, 0; blue, 0 }  ][line width=0.08]  [draw opacity=0] (14.29,-6.86) -- (0,0) -- (14.29,6.86) -- cycle    ;
\draw [line width=0.75]    (294,196) -- (327,196) ;
\draw [shift={(329,196)}, rotate = 180] [color={rgb, 255:red, 0; green, 0; blue, 0 }  ][line width=0.75]    (10.93,-3.29) .. controls (6.95,-1.4) and (3.31,-0.3) .. (0,0) .. controls (3.31,0.3) and (6.95,1.4) .. (10.93,3.29)   ;

\draw (160.8,176.08) node [anchor=north west][inner sep=0.75pt]    {$\vec{u}$};
\draw (126.37,239.73) node [anchor=north west][inner sep=0.75pt]    {$\vec{v}$};
\draw (169.32,266.66) node [anchor=north west][inner sep=0.75pt]    {$\mu \vec{v}$};
\draw (261.72,186.61) node [anchor=north west][inner sep=0.75pt]    {$\vec{v^*}$};
\draw (464.38,176.08) node [anchor=north west][inner sep=0.75pt]    {$\vec{u}$};
\draw (426.52,239.73) node [anchor=north west][inner sep=0.75pt]    {$\vec{v}$};
\draw (340.08,186.61) node [anchor=north west][inner sep=0.75pt]    {$\vec{u} -\lfloor \mu \rceil \vec{v}$};
\end{tikzpicture}

\caption{The resultant vector $\vec{u} - \lfloor \mu \rceil \vec{v}$ is shorter than $\vec{u}$ (as guaranteed by Proposition 2.1) and is still within the lattice generated by $\vec{u}$ and $\vec{v}$.} \label{fig:M1}
\end{figure}

\begin{proof}
When $\left|\mu\right| \leq 1/2$, $\mu$ rounds to 0 and equality holds. When $\left|\mu\right| > 1/2$, the coefficient of $\vec{v}$ is nonzero. We can always rewrite $\vec{u} = \mu\vec{v} + \vec{v^*}$, where $\vec{v^*}$ is a vector orthogonal to $\vec{v}$ (see Figure 1 for an example). Then,
\[\left\Vert\vec{u} - \left\lfloor\mu\right\rceil\vec{v}\right\Vert = \Vert\left(\mu - \lfloor \mu\rceil\right)\vec{v} + \vec{v^*}\Vert\]
\[\leq \left\Vert \frac{1}{2}\vec{v} + \vec{v^*}\right\Vert < \Vert \mu\vec{v} + \vec{v^*}\Vert = \Vert\vec{u}\Vert.
\]
\end{proof}

This tells us that, whenever the absolute value of the projection coefficient is greater than 1/2, by subtracting off a certain multiple of the vector $\vec{v}$ from $\vec{u}$, we obtain a new vector within the lattice that has a smaller projection coefficient (with absolute value $\leq 1/2$) and is shorter than the original $\vec{u}$. So, intuitively, to make a basis with smaller vectors, we should aim to reduce the projection coefficients between vectors, thus making the basis as close to orthogonal as possible.

This operation is essentially ``pseudo-orthogonalization'' within a lattice, and it is used as a core part of the LLL algorithm to reduce a basis. It is similar in process to Gram-Schmidt orthogonalization, which produces vectors that are exactly orthogonal to one another, though at the cost of those vectors not being in the original lattice.

\begin{defn} The Gram-Schmidt orthogonalization $B^* = \{\vec{b}_1^*, \vec{b}_2^*, \ldots, \vec{b}_n^*\}$ of a basis $B = \{\vec{b}_1, \vec{b}_2, \ldots, \vec{b}_n\}$ is computed as follows:
\begin{align*}
    \vec{b}_1^* &= \vec{b}_1\\
    \vec{b}_2^* &= \vec{b}_2 - \frac{\langle\vec{b}_2, \vec{b}_1^*\rangle}{\langle\vec{b}_1^*, \vec{b}_1^*\rangle}\vec{b}_1^*\\
    \vec{b}_3^* &= \vec{b}_3 - \frac{\langle\vec{b}_3, \vec{b}_1^*\rangle}{\langle\vec{b}_1^*, \vec{b}_1^*\rangle}\vec{b}_1^*  - \frac{\langle\vec{b}_3, \vec{b}_2^*\rangle}{\langle\vec{b}_2^*, \vec{b}_2^*\rangle}\vec{b}_2^*\\
    &\,\,\,\,\vdots\\
    \vec{b}_n^* &= \vec{b}_n - \sum_{i=1}^{n-1}
    \frac{\langle\vec{b}_n, \vec{b}_i^*\rangle}{\langle\vec{b}_i^*, \vec{b}_i^*\rangle}\vec{b}_i^*.
\end{align*}
\end{defn}
Note that we do not normalize the orthogonalized vectors for our purposes, since their lengths preserve information about the lengths of the vectors before they were orthogonalized, which will be useful in the algorithm.

We now introduce the concept of a $\delta$-LLL reduced basis, which is the output of the LLL algorithm given any input basis.

\begin{defn} For any basis $B = \{\vec{b}_1, \vec{b}_2, \ldots, \vec{b}_n\}$, we can compute the Gram-Schmidt orthogonalized basis $B^* = \{\vec{b}_1^*, \vec{b}_2^*, \ldots, \vec{b}_n^*\}$ and define the projection coefficients
\[
    \mu_{i, j} = \frac{\langle\vec{b}_i, \vec{b}_j^*\rangle}{\langle\vec{b}_j^*, \vec{b}_j^*\rangle}.
\]
Then, $B$ is $\delta$\textit{-LLL reduced} for some $\delta \in (1/4, 1)$ if and only if it satisfies the following two conditions:
\begin{enumerate}[(1)]
    \item $|\mu_{i, j}| \leq 1/2$ whenever $i > j$. (projection condition)
    \item $(\delta-\mu_{i+1, i}^2)\Vert\vec{b}_i^*\Vert^2 \leq \Vert\vec{b}_{i+1}^*\Vert^2$ for all $i < n$. (ordering condition)
\end{enumerate}
\end{defn}

Why is such a basis desirable? Firstly, the projection condition forces vectors of higher index in the reduced basis to have small projection coefficients when projected onto Gram-Schmidt orthogonalized vectors of lower index. This means each vector is ``close to orthogonal'' to the earlier Gram-Schmidt vectors, which we hope are themselves close to the original vectors of the same index (since we want to produce a basis that is itself close to orthogonal). So, by forcing the vectors to be close to orthogonal to the Gram-Schmidt orthogonalized basis vectors, we force the vectors to attain close to the same shortness that the Gram-Schmidt orthogonalized vectors attain.

By the ordering condition, $\delta-\mu_{i+1, i}^2$ times the squared length of one orthogonalized vector can never be more than the squared length of the next orthogonalized vector, so the earlier vectors in the orthogonalized basis cannot be much longer than the later vectors. Thus, the condition forces the vectors in the Gram-Schmidt orthogonalized basis (and by consequence, the $\delta$-LLL reduced basis) to be more or less ordered by length. However, this ordering is not perfect, because the constant $\delta-\mu_{i+1, i}^2$ will be less than 1.

While it is not immediately obvious, this ordering condition is important, because there are undesirable situations in which a basis satisfying the projection condition can still have very long vectors. Consider that the projection coefficient of a short vector onto a long vector is usually quite low. Since we compare only the projection coefficients of higher-indexed vectors onto lower-indexed Gram-Schmidt orthogonalized vectors, we need the shorter vectors to have lower index in the basis so that the longer vectors are being projected onto the shorter vectors, forcing the long vectors to become shorter.

Since the order of the vectors in the basis is critical to the state of being $\delta$-LLL reduced, it is important that we always consider a basis to be an \textit{ordered} set or a list of vectors, such that the order is preserved between the end of one iteration of the loop in the algorithm to the beginning of the next.

The bounds (1/4, 1) for $\delta$ exist so that the LLL algorithm is \textit{guaranteed} to terminate in polynomial time for any lattice. The reasoning behind these particular bounds will become evident in section 4.

To illustrate the shortness of the vectors in a $\delta$-LLL reduced basis, take the following example: the basis
 \[
B = \left\{\begin{pmatrix}2\\3\\1
\end{pmatrix}, \begin{pmatrix}1\\2\\-1
\end{pmatrix}, \begin{pmatrix}-2\\-2\\2
\end{pmatrix}\right\}
\]
has a corresponding $\frac{3}{4}$-LLL reduced basis of
\[
\left\{\begin{pmatrix}-1\\0\\1
\end{pmatrix}, \begin{pmatrix}0\\2\\0
\end{pmatrix}, \begin{pmatrix}1\\1\\2
\end{pmatrix}\right\},
\]
which, when written in terms of the original basis vectors, is equal to
\[
\left\{\vec{b}_2 + \vec{b}_3, 2\vec{b}_2 + \vec{b}_3, \vec{b}_1 - \vec{b}_2\right\}.
\]
So, the reduced basis is made up of vectors within the lattice of the original basis, yet it clearly has much shorter vectors. It can be verified that this basis satisfies the above conditions.

\section{LLL algorithm}

The algorithm itself does exactly what one would expect: it first checks whether or not the current basis satisfies the projection condition. If it doesn't, the algorithm applies the ``pseudo-orthogonalization'' from Proposition 2.1 so that the condition is satisfied up to a certain index. Then, the algorithm checks if the ordering condition is satisfied. If it isn't, two vectors are swapped. Then, it repeats the whole process until we have a $\delta$-LLL reduced basis.

More explicitly, the LLL algorithm proceeds as follows, using the lattice basis that we want to reduce as the input:\medskip

\hrule\smallskip

Set the indexing variable $i = 2$ and repeat the following until $i > n$:
\begin{enumerate}[(1), topsep=0pt]
    \item Compute the Gram-Schmidt orthogonalization of the current basis.
    \item (Reduction step) For all $j$ from $i-1$ to 1, adjust the $i$th vector in the current basis so that the absolute value of the projection coefficient from $\vec{b}_i$ onto $\vec{b}_j^*$ is $\leq$ 1/2. That is, set $\vec{b}_i \leftarrow \vec{b}_i - \lfloor\mu_{i, j}\rceil\vec{b}_j$. For each value of $j$, we recalculate the projection coefficient $\mu_{i, j}$ using the updated $\vec{b}_i$. We do \textbf{not} need to recalculate the Gram-Schmidt orthogonalized basis after this step (see Lemma 3.1).
    \item (Swapping step) If the ordering condition $(\delta-\mu_{i, i-1}^2)\Vert\vec{b}_{i-1}^*\Vert^2 \leq \Vert\vec{b}_{i}^*\Vert^2$ is not satisfied between the $i$th vector and the $(i-1)$th vector in the Gram-Schmidt orthogonalized basis, swap $\vec{b}_{i-1}$ and $\vec{b}_i$ in the current basis and decrease $i$ by 1 (to a minimum of 2). If the condition is satisfied, increase $i$ by 1.
 \end{enumerate}
Output the current basis.

\medskip
\hrule\smallskip
 
 As an instructive example, we will now go through each step of the algorithm, applied to the basis we used earlier, to construct a $\frac{3}{4}$-LLL reduced basis. We start with
 \[
\left\{\begin{pmatrix}2\\3\\1
\end{pmatrix}, \begin{pmatrix}1\\2\\-1
\end{pmatrix}, \begin{pmatrix}-2\\-2\\2
\end{pmatrix}\right\}.
\]
First, $i = 2.$ The projection condition is satisfied, but the ordering condition is not satisfied, so $\vec{b}_1$ is swapped with $\vec{b}_2$. $i$ cannot be decreased further, so it stays at 2.
\[
\left\{\begin{pmatrix}1\\2\\-1
\end{pmatrix}, \begin{pmatrix}2\\3\\1
\end{pmatrix}, \begin{pmatrix}-2\\-2\\2
\end{pmatrix}\right\}
\]
$i = 2$. $\vec{b}_2$ is changed to $\vec{b}_2 - \vec{b}_1$ to satisfy the projection condition. The ordering condition is then satisfied, so $i$ is incremented to 3.
\[
\left\{\begin{pmatrix}1\\2\\-1
\end{pmatrix}, \begin{pmatrix}1\\1\\2
\end{pmatrix}, \begin{pmatrix}-2\\-2\\2
\end{pmatrix}\right\}
\]
$i = 3$. $\vec{b}_3$ is changed to $\vec{b}_3 + \vec{b}_1$ to satisfy the projection condition. The ordering condition is not satisfied, so $\vec{b}_2$ is swapped with $\vec{b}_3$ and $i$ is decreased to 2.
\[
\left\{\begin{pmatrix}1\\2\\-1
\end{pmatrix}, \begin{pmatrix}-1\\0\\1
\end{pmatrix}, \begin{pmatrix}1\\1\\2
\end{pmatrix}\right\}
\]
$i = 2$. The projection condition is satisfied, but the ordering condition is not satisfied, so $\vec{b}_1$ is swapped with $\vec{b}_2$. $i$ cannot be decreased further, so it stays at 2.
\[
\left\{\begin{pmatrix}-1\\0\\1
\end{pmatrix}, \begin{pmatrix}1\\2\\-1
\end{pmatrix}, \begin{pmatrix}1\\1\\2
\end{pmatrix}\right\}
\]
$i = 2.$ $\vec{b}_2$ is changed to $\vec{b}_2 + \vec{b}_1$ to satisfy the projection condition. The ordering condition is satisfied, so $i$ is increased to 3.
\[
\left\{\begin{pmatrix}-1\\0\\1
\end{pmatrix}, \begin{pmatrix}0\\2\\0
\end{pmatrix}, \begin{pmatrix}1\\1\\2
\end{pmatrix}\right\}
\]
$i = 3.$ Both conditions are satisfied, so $i$ is increased to 4 and the algorithm terminates, outputting the resulting basis:
\[
\left\{\begin{pmatrix}-1\\0\\1
\end{pmatrix}, \begin{pmatrix}0\\2\\0
\end{pmatrix}, \begin{pmatrix}1\\1\\2
\end{pmatrix}\right\}.
\]

 Observe that the LLL algorithm only adjusts the input basis using integer multiples of vectors within the basis --- that is, all operations on $B$ do not introduce any new lattice points. So, the lattice of the output $\delta$-LLL reduced basis is entirely contained in the lattice of the input basis. Note that the Gram-Schmidt orthogonalized basis $B^*$ does \textit{not} generally generate the same lattice as $B$ since we do not care about rounding projection coefficients, but we simply use the orthogonalized basis in the algorithm as a means for testing whether the current basis is reduced.

It is now important to address one potential issue: why do we not require a recomputation of the Gram-Schmidt orthogonalized basis after the reduction step of the algorithm, since we use the orthogonalized basis to verify the ordering condition in the swapping step? This is because the operations in the reduction step do not affect the Gram-Schmidt orthogonalized basis $B^*$. This fact will be useful later, so we will prove it in the following lemma:

\begin{lem}
Let $B^*$ be the Gram-Schmidt orthogonalized basis at the start of an iteration of the algorithm, and let $B'$ be the current (non-orthogonalized) basis after applying the reduction step. Then, the Gram-Schmidt orthogonalization of $B'$ is identical to $B^*$.
\end{lem}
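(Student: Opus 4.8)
The plan is to reduce the whole statement to one structural fact about Gram--Schmidt orthogonalization: for each index $k$, the vector $\vec{b}_k^*$ depends only on $\vec{b}_k$ itself and on the subspace $V_{k-1} := \mathrm{span}(\vec{b}_1, \ldots, \vec{b}_{k-1})$. Indeed, unwinding Definition 2.2 gives $\vec{b}_k^* = \vec{b}_k - \pi_{k-1}(\vec{b}_k)$, where $\pi_{k-1}$ denotes orthogonal projection onto $V_{k-1}$; equivalently, $\vec{b}_k^*$ is the unique vector of the form $\vec{b}_k - v$ with $v \in V_{k-1}$ that is orthogonal to $V_{k-1}$. I would record this as a preliminary observation, together with the one-line induction that $\mathrm{span}(\vec{b}_1^*, \ldots, \vec{b}_{k-1}^*) = V_{k-1}$.

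Next I would pin down exactly what the reduction step does. Throughout the inner loop on $j$, only $\vec{b}_i$ is ever modified; every $\vec{b}_j$ with $j < i$ is left untouched. Hence, whatever the (possibly repeatedly recomputed) integers $c_j := \lfloor \mu_{i,j}\rceil$ turn out to be, the net effect of the step is $\vec{b}_i \mapsto \vec{b}_i' := \vec{b}_i - w$ with $w := \sum_{j=1}^{i-1} c_j \vec{b}_j \in V_{i-1}$, and $\vec{b}_k' = \vec{b}_k$ for every $k \ne i$. Crucially, the specific values of the $c_j$ are irrelevant to what follows; all that matters is $w \in V_{i-1}$.

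Then I would check that replacing $B$ by $B'$ leaves every subspace $V_{k-1}$ unchanged. For $k \le i$ this is immediate, since $\vec{b}_1, \ldots, \vec{b}_{k-1}$ are unchanged. For $k > i$ we have $w \in V_{i-1} \subseteq V_{k-1}$, so replacing $\vec{b}_i$ by $\vec{b}_i - w$ inside the spanning set $\{\vec{b}_1, \ldots, \vec{b}_{k-1}\}$ does not alter its span. The conclusion now follows from the preliminary observation: for $k \ne i$ both $\vec{b}_k$ and $V_{k-1}$ are unchanged, so $(\vec{b}_k')^* = \vec{b}_k^*$; and for $k = i$, using linearity of $\pi_{i-1}$ together with $\pi_{i-1}(w) = w$ (as $w \in V_{i-1}$),
\[
(\vec{b}_i')^* = \vec{b}_i' - \pi_{i-1}(\vec{b}_i') = (\vec{b}_i - w) - \pi_{i-1}(\vec{b}_i - w) = \vec{b}_i - \pi_{i-1}(\vec{b}_i) = \vec{b}_i^*.
\]

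The only place needing a bit of care — the ``main obstacle,'' such as it is — is the case $k > i$: one must not forget that the later Gram--Schmidt vectors also depend on $\vec{b}_i$, so checking that $\vec{b}_i^*$ alone is preserved is not enough. One genuinely needs the observation that subtracting an element of $V_{i-1}$ from $\vec{b}_i$ leaves every span $V_{k-1}$, hence every projection $\pi_{k-1}$, intact. Everything else is a direct substitution into the Gram--Schmidt formula.
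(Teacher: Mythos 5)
Your proof is correct, and it takes a genuinely different and cleaner route than the paper's. The paper argues coefficient by coefficient: it first reduces to showing $\vec{b}_i^*$ is unchanged (with the remark that for $j < i$ nothing is touched, and for $j > i$ the later Gram--Schmidt vectors are built from unchanged lower-indexed orthogonalized vectors), and then it tracks how the projection coefficient $\mu_{i,i-1}$ changes under the reduction step, shows the two subtractions recombine to $\mu_{i,i-1}\vec{b}_{i-1}^*$, and iterates this bookkeeping down through $\vec{b}_{i-2}^*, \vec{b}_{i-3}^*, \ldots$. You instead encapsulate Gram--Schmidt in the characterization $\vec{b}_k^* = \vec{b}_k - \pi_{k-1}(\vec{b}_k)$, where $\pi_{k-1}$ is orthogonal projection onto $V_{k-1} = \mathrm{span}(\vec{b}_1,\ldots,\vec{b}_{k-1})$, observe that the reduction step only subtracts some $w \in V_{i-1}$ from $\vec{b}_i$, and note that this preserves every $V_{k-1}$. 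That single structural fact kills all the cases at once: the linearity of $\pi_{i-1}$ plus $\pi_{i-1}(w) = w$ handles $k=i$ in one line, and the $k \ne i$ cases are immediate because both $\vec{b}_k$ and $V_{k-1}$ are literally unchanged. The paper's route is more elementary (it effectively re-derives the projection fact inline rather than invoking it) and stays closer to the explicit formulas of Definition 2.2; yours is shorter and more conceptual, avoids the downward iteration and the explicit tracking of $\mu'_{i,i-1}$, and makes transparent why the specific integers $c_j = \lfloor\mu_{i,j}\rceil$ (and the order in which they are computed) are irrelevant. Both arguments are rigorous, and your handling of $k > i$ via $w \in V_{i-1} \subseteq V_{k-1}$ is a clean replacement for the paper's forward induction on the higher-indexed orthogonalized vectors.
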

\begin{proof}
 During any given iteration, $i$ is fixed. None of the vectors $\vec{b}_j$ for $j \not= i$ are modified in the reduction step, so when $j < i,$ the $\vec{b}_j^*$ will remain the same. When $j > i$, assuming $\vec{b}_i^*$ is unchanged, since each $\vec{b}_j^*$ is computed using orthogonalized vectors of lower index, which would all be unchanged, $\vec{b}_j^*$ will be unchanged. Therefore, it suffices to only show that $\vec{b}_i^*$ is unchanged.

We can first look at the projection of $\vec{b}_i$ onto the Gram-Schmidt orthogonalized vector $\vec{b}_{i-1}^*$. During the Gram-Schmidt orthogonalization process at the start of the iteration, we subtract $\mu_{i,i-1}\vec{b}_{i-1}^*$ from $\vec{b}_i$ in order to compute $\vec{b}_i^*$. All other subtractions are orthogonal to this and thus do not affect the projection onto $\vec{b}_{i-1}^*.$

If we instead were to apply the reduction step to produce $B'$, we would have subtracted an integer linear combination of the $\vec{b}_j$ from $\vec{b}_i$, for $j < i$. From Definition 2.2, adding the sum to both sides of the equation, we can see that any vector in the basis can be represented as a linear combination of Gram-Schmidt orthogonalized vectors. Specifically, since each basis vector with index less than $i-1$ can be represented as a sum of Gram-Schmidt orthogonalized vectors of index less than $i-1$ (which are orthogonal to $\vec{b}_{i-1}^*$), the only part being subtracted from $\vec{b}_i$ in the reduction step that affects the projection coefficient $\mu_{i, i-1}$ is $\lfloor\mu_{i, i-1}\rceil\vec{b}_{i-1}$. Subtracting this would leave a new projection coefficient of
\[
\mu'_{i,i-1} = \frac{\langle\vec{b}_i-\lfloor\mu_{i, i-1}\rceil\vec{b}_{i-1}, \vec{b}_{i-1}^*\rangle}{\langle\vec{b}_{i-1}^*, \vec{b}_{i-1}^*\rangle}
.\]
Using linearity of the inner product, and the fact that the projection of a vector onto its Gram-Schmidt orthogonalized counterpart must be 1, we get that
\[
\mu'_{i,i-1} = \mu_{i,i-1} - \lfloor\mu_{i, i-1}\rceil.
\]
Then, when applying Gram-Schmidt orthogonalization to $B'$, we subtract \textit{another} vector off of $\vec{b}_i$ to compute $\vec{b}_i^*$: in particular, we would need to subtract $\mu'_{i,i-1}\vec{b}_{i-1}^*$. Combining the two subtractions, this means that the component of $\vec{b}_i$ parallel to $\vec{b}_{i-1}^*$ being subtracted off to produce $\vec{b}_i^*$ would be 
\[(\left\lfloor\mu_{i,i-1}\right\rceil + \mu'_{i,i-1})\vec{b}_{i-1}^* = \mu_{i,i-1}\vec{b}_{i-1}^*,\]
which is the same as the result from the orthogonalization process on $B$. Therefore, $\vec{b}_i$ is reduced to having the same component parallel to $\vec{b}_{i-1}^*$ in both cases. Then, the same argument holds when you are considering the component of $\vec{b}_i$ in the direction of $\vec{b}^*_{i-2}$. This is because $\lfloor\mu_{i, i-1}\rceil$ becomes 0 after the reduction with respect to $\vec{b}^*_{i-1}$, so the only term that matters in what is being subtracted is $\lfloor\mu_{i, i-2}\rceil\vec{b}_{i-2}$. The argument can likewise be applied iteratively to the projection onto all Gram-Schmidt orthogonalized vectors with index less than $i$. 
\end{proof}

Now, before looking at the runtime of the algorithm, we must first verify that the LLL algorithm, if it has an output, actually gives us the reduced basis that we want.

\begin{prop} Any basis $B$ that is an output of the LLL algorithm is a $\delta$-LLL reduced basis.
\end{prop}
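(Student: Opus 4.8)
The plan is to identify a loop invariant and prove by induction on the number of iterations performed that it holds at the start of every iteration. Writing $i$ for the current value of the indexing variable, the invariant I would use is: the current basis satisfies $|\mu_{j,k}| \le 1/2$ for all $k < j \le i-1$, and $(\delta - \mu_{j+1,j}^2)\Vert \vec{b}_j^* \Vert^2 \le \Vert \vec{b}_{j+1}^* \Vert^2$ for all $j \le i-2$; informally, the first $i-1$ basis vectors already satisfy the two conditions of Definition 2.3 among themselves. Granting this, the proposition follows at once: since $B$ is an output, the algorithm halted, which happens only when the swapping step raises $i$ from $n$ to $n+1$, and substituting $i = n+1$ into the invariant yields precisely conditions (1) and (2) of Definition 2.3 for the output basis.

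The base case is $i=2$, where both parts of the invariant are vacuous. For the inductive step I would walk through the three steps of a single iteration. Step (1) rebuilds $B^*$ but changes no basis vector. Step (2) alters only $\vec{b}_i$ and, by Lemma 3.1, leaves $B^*$ unchanged; hence every $\mu_{j,k}$ with $j \ne i$ and every $\Vert \vec{b}_j^* \Vert$ is unaffected, so all the invariant's inequalities for indices below $i$ survive, and in addition the reduction loop forces $|\mu_{i,k}| \le 1/2$ for every $k < i$. Step (3) splits into two cases. If $(\delta - \mu_{i,i-1}^2)\Vert \vec{b}_{i-1}^* \Vert^2 \le \Vert \vec{b}_i^* \Vert^2$ holds, there is no swap and $i$ becomes $i+1$; the inequalities just listed are exactly the invariant for index $i+1$, the new ordering inequality at $j=i-1$ being the very condition just checked. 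If that inequality fails, $\vec{b}_{i-1}$ and $\vec{b}_i$ are swapped and $i$ becomes $\max(i-1,2)$; I would observe that the swap touches only positions $i-1$ and $i$, so $\vec{b}_1,\dots,\vec{b}_{i-2}$ — and therefore $\vec{b}_1^*,\dots,\vec{b}_{i-2}^*$, which depend only on those vectors — are untouched, so every inequality involving indices at most $i-2$ still holds. That is exactly the invariant for the new index $i-1$, and it is vacuous if the new index is $2$.

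The step I expect to be the main obstacle is calibrating the invariant so that it survives a decrement of $i$. Right after a swap we have temporarily lost any control over the projection coefficients and Gram--Schmidt lengths at positions $i-1$ and $i$, so the invariant can only assert reducedness of the first $i-1$ vectors, never the first $i$; and one must check that decrementing back to $i-1$ really does re-enter the invariant, which hinges on the locality of Gram--Schmidt orthogonalization ($\vec{b}_m^*$ depends only on $\vec{b}_1,\dots,\vec{b}_m$) together with Lemma 3.1 for the reduction step. Once the invariant is stated correctly, verifying it across the three steps is routine.
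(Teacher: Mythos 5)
Your proof is correct and takes essentially the same approach as the paper's. Both arguments are inductions tracking the fact that as $i$ advances the prefix of the basis already processed remains $\delta$-LLL reduced, and both rest on the same two observations: Lemma 3.1 guarantees the reduction step does not disturb $B^*$, and a swap at positions $i-1,i$ leaves $\vec{b}_1,\dots,\vec{b}_{i-2}$ and hence $\vec{b}_1^*,\dots,\vec{b}_{i-2}^*$ untouched, so decrementing $i$ re-enters the invariant. The only difference is framing: the paper phrases it retrospectively ("after $i$ attained the value $j$ for the last time, $\vec{b}_1,\dots,\vec{b}_j$ satisfy the conditions"), whereas you phrase it as a forward loop invariant held at the top of every iteration. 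Your version is a little more explicit about what is being inducted on and makes the verification across the three steps of an iteration more mechanical, which is a mild stylistic improvement but not a different argument.
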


\begin{proof}If the algorithm terminates with an output, then $i$ would have attained each value $2, 3, \ldots, n$ for the last time at some iteration. We want to show that after $i$ attained the value $j$ for the last time, the vectors $\vec{b}_1, \vec{b}_2, \ldots, \vec{b}_j$ always satisfied the $\delta$-LLL reduced basis conditions. In the case when $i = 2$ for the last time, the first two vectors must have satisfied the ordering condition because otherwise they would have been swapped. Then, they must have satisfied the projection condition because when $i = 2$, the coefficient $\mu_{2, 1}$ was reduced to absolute value $\leq 1/2$ by the reduction step.

Now, assume that the vectors $\vec{b}_1, \vec{b}_2, \ldots, \vec{b}_j$ satisfy the conditions and $i$ never attains any value $\leq j+1$ later in the algorithm. Then, $\vec{b}_{j+1}$ must satisfy the ordering condition with $\vec{b}_j$ because otherwise they would be swapped when $i = j+1$, and then $i$ would decrease, which is a contradiction. All other vectors with lower index must satisfy the ordering condition as well because of our assumption. Also, $\vec{b}_{j+1}$ must satisfy the projection condition because at the last iteration in the reduction step, all $\mu_{j+1, \ell}$ were reduced to absolute value $\leq 1/2$ for $\ell < j+1$.

So, the claim holds for all $i = 2, 3, \ldots, n$ by induction. In particular, after $i = n$ for the last time in the algorithm (i.e. when the algorithm terminates), all vectors in the basis satisfy the $\delta$-LLL reduced basis conditions. Therefore, the algorithm produces a $\delta$-LLL reduced basis, as we hoped.
\end{proof}

We have shown that, given any input lattice, the LLL algorithm --- if it has an output --- will output a $\delta$-LLL reduced basis, which is made up of short (though not necessarily the shortest) vectors in the lattice.
Naturally, we want to also prove that the algorithm is \textit{guaranteed} to terminate for $\delta\in(1/4, 1)$, and furthermore, that the algorithm is guaranteed to terminate in \textit{polynomial time}.

\section{Polynomial time bound for LLL algorithm}

We now aim to show that the LLL algorithm will terminate in polynomial time as a function of the dimension, $k$, of the real space in which the input basis vectors lie. For this section, ``polynomially bounded'' or ``polynomial time'' refers to a polynomial in terms of the dimension $k$; that is, anything that runs in $O(k^m)$ bit operations for some constant $m$. For most of the following proofs, it suffices to show the algorithm is polynomial in the rank of the basis, $n$. This is because linear independence of the basis tells us that $n \leq k$, so anything that is polynomial in $n$ is also polynomial in $k$.
The proofs will follow this general outline:
\begin{center}
\begin{itemize}
    \item[] For integer bases:
    \begin{itemize}[$\bullet$]
        \item Using the determinant to define the size of a basis
        \item Using the size of a basis to show the number of iterations is polynomially bounded
        \item Showing the time for each iteration is polynomially bounded
    \end{itemize}
    \item[] For rational bases:
    \begin{itemize}[$\bullet$]
        \item Showing the algorithm is correct for rational bases by using integer bases
        \item Showing the algorithm is still polynomial time even with the extra bits from numerators and denominators
    \end{itemize}
\end{itemize}
\end{center}
We want to show the algorithm works as desired for rational inputs because this is most practical for any implementation of the LLL algorithm --- computers can only store real numbers to some finite number of decimal places, so effectively they are always stored as rational numbers with a small degree of error. Also, oftentimes real numbers can't even be used to construct a lattice: for example, integer linear combinations of 1 and $\sqrt{2}$ are dense in $\R$, so they do not form a set of discrete points.

We are making the standard assumption that each element of the vectors in the input basis can be expressed in a constant number of bits, so bit size is not an important consideration for the runtime of the algorithm on integer bases. However, bit size will become important when proving the results for rational bases, since it is not immediately obvious why the operations in the algorithm do not result in an exponential increase in bit size for the numerators or denominators in the basis as the algorithm progresses. If the numbers were exponentially increasing in bit size, then standard operations on those numbers would take exponential time, forcing the algorithm to no longer be polynomially bounded.

We now begin the argument in the case where the input to the algorithm is an integer basis. The method of proof used here will involve assigning a positive integer ``size'' to the basis, and showing that the size is reduced by at least a fixed factor with every iteration. Since the size is a positive integer, it cannot go below 1, so we can use the fixed reduction factor to compute an upper bound of the number of iterations of the algorithm. A sensible size function can be expressed in terms of determinants, using vectors from the input basis.

\begin{defn} By a slight abuse of notation, we can associate a basis $B$ with a matrix $B$, whose columns are the vectors of the basis in their given order. Then, we define the \textit{determinant} of a basis $B$
\[
D(B) = \sqrt{\det(B^T B)},
\]
where $\det(\cdot)$ is the standard matrix determinant.
\end{defn}
This definition works even when the corresponding matrix $B$ is not square, but we must consider the non-square case separately when proving the following property.

\begin{lem} For any basis $B$ that can be represented as a square matrix,
\[
D(B) = \prod_{i=1}^n||\vec{b}^*_i||.
\]
\end{lem}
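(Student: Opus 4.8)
The plan is to factor the basis matrix through its Gram--Schmidt orthogonalization. Let $B^*$ denote the matrix whose columns are $\vec{b}_1^*, \ldots, \vec{b}_n^*$ in order. Rearranging Definition 2.2 so that each $\vec{b}_i$ is expressed in terms of the orthogonalized vectors gives $\vec{b}_i = \vec{b}_i^* + \sum_{j<i}\mu_{i,j}\vec{b}_j^*$, so that $B = B^*U$, where $U$ is the upper-triangular matrix with $U_{ii} = 1$ for all $i$ and $U_{ji} = \mu_{i,j}$ for $j < i$. Being triangular with unit diagonal, $U$ satisfies $\det(U) = \det(U^T) = 1$. This factorization is the only real content of the proof.

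Next I would compute $B^TB = U^T (B^*)^T B^* U$. Since the Gram--Schmidt vectors are pairwise orthogonal, $(B^*)^T B^*$ is a diagonal matrix whose $i$th diagonal entry is $\langle \vec{b}_i^*, \vec{b}_i^*\rangle = \Vert \vec{b}_i^*\Vert^2$. Taking determinants and using multiplicativity together with $\det(U) = \det(U^T) = 1$ yields $\det(B^TB) = \prod_{i=1}^n \Vert \vec{b}_i^*\Vert^2$. Taking the nonnegative square root gives $D(B) = \sqrt{\det(B^TB)} = \prod_{i=1}^n \Vert \vec{b}_i^*\Vert$, as claimed. (The square-matrix hypothesis is exactly what makes $U$ square, so that this determinant splits cleanly; alternatively one could note $\det(B^TB) = \det(B)^2$ and $\det(B) = \det(B^*)\det(U) = \det(B^*)$, but routing through $B^TB$ avoids any sign bookkeeping and matches the definition of $D(B)$ directly.)

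I do not anticipate a genuine obstacle here. The one step worth recording carefully is that $(B^*)^T B^*$ is diagonal, i.e. that Gram--Schmidt produces pairwise orthogonal vectors; this follows by a short induction on the index, since $\vec{b}_i^*$ is by construction $\vec{b}_i$ minus its components along each $\vec{b}_j^*$ with $j < i$, hence orthogonal to all of them. The non-square case alluded to immediately after Definition 4.1 is deferred and treated separately, so it need not concern us in this lemma.
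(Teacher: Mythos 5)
Your proof is correct, and it follows a cleaner route than the paper's, even though both hinge on the same factorization $B = B^*U$. The paper first reduces $D(B) = \sqrt{\det(B^TB)}$ to $|\det(B)|$ (which genuinely uses that $B$ is square), and then further splits $B^*$ into an orthonormal matrix $Q$ times a diagonal matrix $\Lambda$ of norms, giving a three-factor decomposition $B = Q\Lambda U$ before taking determinants. You instead stay with $B^TB = U^T\bigl((B^*)^TB^*\bigr)U$ and use only the fact that $(B^*)^TB^*$ is diagonal with entries $\Vert\vec{b}_i^*\Vert^2$. This avoids normalizing the Gram--Schmidt vectors (a step the paper explicitly chose not to do elsewhere) and avoids the $\pm$ sign bookkeeping for $\det(Q)$.

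One remark in your proof is off, though, and it is worth flagging because it undersells your own argument. You write that the square-matrix hypothesis ``is exactly what makes $U$ square, so that this determinant splits cleanly.'' That is not so: $U$ is $n\times n$ regardless of $k$, since it records the coordinates of the $n$ original vectors in terms of the $n$ Gram--Schmidt vectors. Indeed, the factorization $B = B^*U$ with $B$ and $B^*$ of size $k\times n$ and $U$ of size $n\times n$ holds for any $k\ge n$, and $(B^*)^TB^*$ is $n\times n$ diagonal in full generality. The consequence is that your route through $B^TB$ already proves Proposition 4.3 (the non-square case) without any further work, whereas the paper's route through $|\det(B)|$ breaks down when $B$ is not square and forces the augmentation argument in Proposition 4.3. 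So rather than needing the square hypothesis, your proof dispenses with it.
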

\begin{proof}First, we can show that $D(B) = |\det(B)|$. This follows from the fact that $\det(B^T) = \det(B)$, so
\[
    D(B) = \sqrt{\det(B^TB)} = \sqrt{\det(B)^2} = |\det(B)|.
\]

Then, recall from the Gram-Schmidt orthogonalization process that
\begin{align*}
\vec{b}^*_1 &= \vec{b}_1\\
\vec{b}^*_2 &= \vec{b}_2 - \mu_{2, 1}\vec{b}^*_1\\
\vec{b}^*_3 &= \vec{b}_3 - \mu_{3, 2}\vec{b}^*_2 - \mu_{3, 1}\vec{b}^*_1\\
&\vdots\\
\implies \vec{b}_i &= \sum_{j=1}^{i} \mu_{i, j}\vec{b}^*_j,
\end{align*}
noting that $\mu_{i, i} = 1$. Since each of the original basis vectors can be expressed as a linear combination of the orthogonalized basis vectors, we can verify by observation that the following matrix decomposition holds:

\[
    B = \left[\begin{array}{@{}c|c|c@{}}
        \vec{b}_1 & \dots  & \vec{b}_n
        \end{array}\right] =
        \left[\begin{array}{@{}c|c|c@{}}
        \vec{b}^*_1 & \dots  & \vec{b}^*_n
        \end{array}\right]
        \begin{bmatrix} 
        \mu_{1, 1} & \mu_{2, 1} & \dots  & \mu_{n, 1}\\
        0 & \mu_{2, 2} & \dots  & \mu_{n, 2}\\
        \vdots & \vdots & \ddots & \vdots\\
        0 & 0 & \dots & \mu_{n, n} 
        \end{bmatrix}
\]
\[
=
        \left[\begin{array}{@{}c|c|c@{}}
        \frac{\vec{b}^*_1}{||\vec{b}^*_1||} & \dots  & \frac{\vec{b}^*_n}{||\vec{b}^*_n||}
         \end{array}\right]
        \begin{bmatrix} 
        ||\vec{b}^*_1|| & 0 & \dots  & 0\\
        0 & ||\vec{b}^*_2|| & \dots  & 0\\
        \vdots & \vdots & \ddots & \vdots\\
        0 & 0 & \dots & ||\vec{b}^*_n||
        \end{bmatrix}
        \begin{bmatrix} 
        1 & \mu_{2, 1} & \dots  & \mu_{n, 1}\\
        0 & 1 & \dots  & \mu_{n, 2}\\
        \vdots & \vdots & \ddots & \vdots\\
        0 & 0 & \dots & 1 
        \end{bmatrix}
\]

Each vector in the leftmost matrix is orthonormal by this construction, so the determinant of the matrix is $\pm1$. Since the rightmost matrix is upper triangular, its determinant is the product of its entries along the main diagonal, which is also 1. Likewise, the determinant of the middle matrix is the product of entries along the main diagonal. Since the product of determinants gives us the determinant of the product, we can conclude that
\[
\det(B) = \pm\prod_{i=1}^n||\vec{b}^*_i||
\]
\[
\implies |\det(B)| = \prod_{i=1}^n||\vec{b}^*_i||.
\]
\end{proof}

\begin{prop} Lemma 4.2 holds for all bases, even if the rank of the basis is less than the dimension of each vector in the basis.
\end{prop}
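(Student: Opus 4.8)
The plan is to circumvent the obstruction that forced us to treat the square case separately: the quantity $\det(B)$ only makes sense for a square matrix, but $D(B) = \sqrt{\det(B^T B)}$ is defined through the Gram matrix $B^T B$, which is $n \times n$ regardless of the ambient dimension $k$. So I would work directly with $B^T B$ and never form $\det(B)$ at all.

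First I would observe that the matrix identity derived in the proof of Lemma 4.2, namely $\vec{b}_i = \sum_{j=1}^i \mu_{i,j}\vec{b}_j^*$ with $\mu_{i,i} = 1$, holds verbatim for a non-square basis, since it is just a restatement of the Gram-Schmidt equations and never used squareness. Writing $B = \left[\,\vec b_1 \mid \cdots \mid \vec b_n\,\right]$ as a $k\times n$ matrix, $B^* = \left[\,\vec b_1^* \mid \cdots \mid \vec b_n^*\,\right]$ as a $k\times n$ matrix, and $M$ for the $n\times n$ upper-triangular matrix with entries $M_{j,i} = \mu_{i,j}$ for $j\le i$ and $0$ otherwise, this identity says precisely $B = B^* M$, with every diagonal entry of $M$ equal to $1$.

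Next I would compute $B^T B = M^T\,(B^*)^T B^*\,M$. The middle factor $(B^*)^T B^*$ is the $n\times n$ matrix with $(i,j)$ entry $\langle \vec b_i^*, \vec b_j^*\rangle$, which by orthogonality of the Gram-Schmidt vectors is the diagonal matrix $\mathrm{diag}\bigl(\Vert\vec b_1^*\Vert^2,\ldots,\Vert\vec b_n^*\Vert^2\bigr)$. Taking determinants, using multiplicativity and $\det(M^T)=\det(M)=1$ (upper triangular with unit diagonal), gives
\[
\det(B^T B) = \det(M)^2 \prod_{i=1}^n \Vert\vec b_i^*\Vert^2 = \prod_{i=1}^n \Vert\vec b_i^*\Vert^2,
\]
and taking the nonnegative square root yields $D(B) = \prod_{i=1}^n \Vert\vec b_i^*\Vert$, as claimed.

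I do not expect a genuine obstacle here: the only points requiring care are noting that the decomposition $B = B^* M$ never relied on $B$ being square, and that $(B^*)^T B^*$ is honestly diagonal. As an alternative, one could instead pad $B$ to a $k\times k$ matrix $\tilde B$ by appending an orthonormal basis of the orthogonal complement of the column span of $B$; then $\tilde B^T\tilde B$ is block diagonal with blocks $B^T B$ and $I_{k-n}$, so Lemma 4.2 applied to the square matrix $\tilde B$ recovers the statement — but the direct Gram-matrix computation above is shorter and more transparent.
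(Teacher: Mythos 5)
Your primary argument is correct and takes a genuinely different route from the paper. The paper proves this by padding $B$ to a square $k \times k$ matrix $\tilde{B}$: it appends $k-n$ orthonormal vectors spanning the orthogonal complement of the column span of $B$, observes that Gram-Schmidt leaves these appended vectors fixed so that the already-proved square case gives $D(\tilde B) = \prod_{i \leq n}\Vert\vec b_i^*\Vert$, and then shows $\det(\tilde B^T \tilde B) = \det(B^T B)$ by writing out the block-structured Gram matrix and expanding by minors. Your factorization $B = B^* M$ with $M$ unit upper triangular, followed by $B^T B = M^T (B^*)^T B^*\, M$ and the observation that $(B^*)^T B^*$ is the diagonal matrix of squared Gram-Schmidt norms, works directly with the $n \times n$ Gram matrix and never needs the ambient dimension to equal the rank. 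This bypasses the square/non-square case distinction entirely, subsuming Lemma 4.2 rather than invoking it, and is shorter: no orthonormal complement, no minor expansion. What the paper's route buys is modularity — it reuses Lemma 4.2 as a black box — but your Gram-matrix computation is the more transparent and more unified argument. The alternative you sketch at the end (pad to $\tilde B$ and observe the block-diagonal Gram matrix) is essentially the paper's proof, stated a bit more crisply than the paper's own minor expansion.
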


\begin{proof}We have already proven Lemma 4.2 for bases corresponding to square matrices, so assume that the rank of the basis, $n$, is less than the dimension of each vector, $k$. In this case, we can augment the matrix $B$ and apply the same process. Take the span of $B$ and find its orthogonal complement, which is a subspace of $\R^k$ of dimension $k-n$. Since every subspace of $\R^k$ has an orthonormal basis, we can choose $k-n$ orthonormal vectors $\vec{v}_1, \ldots,\vec{v}_{k-n}$ that form a basis of this orthogonal complement, defining
\[
\tilde{B} = 
        \left[\begin{array}{@{}c|c|c|c|c|c@{}}
        \vec{b}_1 & \dots & \vec{b}_n & \vec{v}_1 & \dots & \vec{v}_{k-n}
        \end{array}\right].
\]
By this construction, all the $\vec{v}_i$ will be orthogonal to $\vec{b}_1, \ldots, \vec{b}_n$. 
Since Gram-Schmidt orthogonalization on the first $n$ vectors of $\tilde{B}$ does not change their span, the $\vec{v}_i$ will also be orthogonal to $\vec{b}_1^*, \ldots, \vec{b}_n^*$, meaning their projection coefficients onto previously orthogonalized vectors will always be 0. So, all ${v}_i$ will be unchanged under Gram-Schmidt orthogonalization, and thus
\[
\tilde{B}^* = \left[\begin{array}{@{}c|c|c|c|c|c@{}}
        \vec{b}_1^* & \dots & \vec{b}_n^* & \vec{v}_1 & \dots & \vec{v}_{k-n}
        \end{array}\right].
\]
Since $\tilde{B}$ is a square matrix, it satisfies the condition of Lemma 4.2, so its determinant is equal to the product of the lengths of the Gram-Schmidt orthogonalized vectors. Since all of the $\vec{v}_i$ have length 1, we get that
\[
D(\tilde{B}) = \prod_{i=1}^n||\vec{b}_i^*||.
\]

Now, consider the product $\tilde{B}^T\tilde{B}$. When carrying out the multiplication, we can take advantage of the fact that the dot product of any of the $v_i$ with another vector other than itself is equal to zero due to orthogonality, and the dot product with itself is 1. So, we get that
\[
\tilde{B}^T\tilde{B} = \begin{bmatrix} 
        \vec{b}_1\cdot\vec{b}_1 & \dots & \vec{b}_1\cdot\vec{b}_n &  &  &\\
        \vdots & \ddots & \vdots &  &  & \\
        \vec{b}_n\cdot\vec{b}_1& \dots & \vec{b}_n\cdot\vec{b}_n &  &  & \\
         &  &  & 1 &  &  \\
         &  &  &  & \ddots & \\
         &  &  &  &  & 1 \\
        \end{bmatrix}
\]
We can reduce this determinant, for example using expansion by minors, to get that
\[
\det(\tilde{B}^T\tilde{B}) = \det\left(\begin{bmatrix} 
        \vec{b}_1\cdot\vec{b}_1 & \dots & \vec{b}_1\cdot\vec{b}_n\\
        \vdots & \ddots & \vdots\\
        \vec{b}_n\cdot\vec{b}_1& \dots & \vec{b}_n\cdot\vec{b}_n\\
        \end{bmatrix}\right) = \det(B^TB)
\]
So, we can conclude that
\[
D(B) = \sqrt{\det(B^TB)} = \sqrt{\det(\tilde{B}^T\tilde{B})} = D(\tilde{B}) = \prod_{i=1}^n||\vec{b}_i^*||.
\]
\end{proof}

Using this definition of the determinant, we can now construct the size function
\[
S(B) = \prod_{i=1}^n D(\{\vec{b}_1, \vec{b}_2, \ldots, \vec{b}_i\})^2 = \prod_{i=1}^n\prod_{j=1}^i ||\vec{b}_j^*||^2.
\] 

\newgeometry{bottom=1in, left=1.5in, right=1in}
\begin{lem}For the integer basis $B$ at any step of the LLL algorithm, $S(B)$ is always a positive integer.
\end{lem}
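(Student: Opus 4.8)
The plan is to reduce everything to a statement about Gram matrices of integer vectors. By Definition 4.1, for each $i$ we have $D(\{\vec{b}_1,\ldots,\vec{b}_i\})^2 = \det(B_i^T B_i)$, where $B_i$ denotes the $k \times i$ matrix whose columns are $\vec{b}_1,\ldots,\vec{b}_i$. The $(a,b)$ entry of the $i\times i$ matrix $B_i^T B_i$ is the inner product $\langle \vec{b}_a, \vec{b}_b\rangle$, which is a sum of products of integers and hence an integer. Therefore $B_i^T B_i$ is an integer matrix, and its determinant is an integer. This already shows each factor $D(\{\vec{b}_1,\ldots,\vec{b}_i\})^2$ appearing in $S(B)$ is an integer, so $S(B)$ is an integer.

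It remains to check that $S(B) > 0$, i.e.\ that each of these factors is strictly positive rather than zero. For this I would invoke Proposition 4.3 (the extension of Lemma 4.2 to non-square matrices): it gives $D(\{\vec{b}_1,\ldots,\vec{b}_i\}) = \prod_{j=1}^i ||\vec{b}_j^*||$. Since $B$ is a basis, the vectors $\vec{b}_1,\ldots,\vec{b}_n$ are linearly independent, so every Gram--Schmidt vector $\vec{b}_j^*$ is nonzero; hence $||\vec{b}_j^*|| > 0$ and $D(\{\vec{b}_1,\ldots,\vec{b}_i\})^2 = \prod_{j=1}^i ||\vec{b}_j^*||^2 > 0$. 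Being a positive integer, each such factor is at least $1$, and $S(B) = \prod_{i=1}^n D(\{\vec{b}_1,\ldots,\vec{b}_i\})^2$ is then a product of positive integers, hence itself a positive integer.

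One point worth addressing, since the claim is made at \emph{any} step of the LLL algorithm, is that $B$ really does remain an integer basis throughout. This follows from the observation made earlier in the paper: every operation the algorithm performs on $B$ --- subtracting an integer multiple of one basis vector from another in the reduction step, or swapping two basis vectors in the swapping step --- sends integer bases to integer bases. So if the input has integer entries, so does the current basis at the start of every iteration, and the computation above applies verbatim.

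There is essentially no hard step here; the only things to be careful about are (i) identifying $D(\{\vec{b}_1,\ldots,\vec{b}_i\})^2$ with the integer Gram determinant $\det(B_i^T B_i)$ rather than trying to argue integrality directly from $\prod_{j=1}^i ||\vec{b}_j^*||^2$, whose individual factors need not even be rational; and (ii) using linear independence to exclude a vanishing factor, so that "integer" can be upgraded to "positive integer."
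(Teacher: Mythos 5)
Your proof is correct and uses the same core device as the paper: identifying each factor $D(\{\vec{b}_1,\ldots,\vec{b}_i\})^2$ with the integer Gram determinant $\det(B_i^T B_i)$, and observing that the algorithm's operations (integer subtractions and swaps) keep the basis integral at every iteration. Where you differ is in establishing strict positivity. The paper simply remarks that $S(B)$ is positive ``since each factor is squared,'' which by itself only yields nonnegativity; you instead invoke Proposition 4.3 to write $D(\{\vec{b}_1,\ldots,\vec{b}_i\})^2 = \prod_{j=1}^i ||\vec{b}_j^*||^2$ and use linear independence of the basis to conclude that each $\vec{b}_j^*$ is nonzero, so the product is strictly positive. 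This is a small but genuine tightening: the paper's stated reason does not actually exclude a zero factor, and your route through linear independence is what closes that gap. Your added remark that integrality persists across iterations of the algorithm is also present in the paper's proof, so the overall structure matches.
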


\begin{proof}We can associate the subset $\{\vec{b}_1, \vec{b}_2, \ldots, \vec{b}_i\}$ of the basis $B$ with the matrix $B_i$. Then, since $B$ is an integer basis, $B$ and $B_i$ have only integer entries, so $B_i^TB_i$ has only integer entries and thus has integer determinant. So, $D(\{\vec{b}_1, \vec{b}_2, \ldots, \vec{b}_i\})^2 = \det(B_i^TB_i)$ is an integer for all $i$.

Since $S(B)$ is the product of integers, it itself must be an integer. After each iteration of the algorithm, the basis may change, but the vectors remain within the integer lattice defined by the initial integer basis. Therefore, the vectors will always have integer components, which makes $S(B)$ an integer. In particular, it is a positive integer, since each factor is squared.
\end{proof}

\begin{lem}If we take an integer basis $B$ and the basis $B'$ produced after the reduction step in a single iteration of the LLL algorithm to $B$, then
\[
    S(B') = S(B).
\]
\end{lem}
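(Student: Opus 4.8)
The plan is to reduce this immediately to Lemma 3.1. The size function is defined purely in terms of the Gram-Schmidt orthogonalized vectors: $S(B) = \prod_{i=1}^n\prod_{j=1}^i \|\vec{b}_j^*\|^2$. So if I can argue that the reduction step leaves every $\vec{b}_j^*$ unchanged, then every factor in this double product is unchanged, and $S(B') = S(B)$ follows directly.

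The key observation is that Lemma 3.1 already establishes exactly what I need: it says that if $B^*$ is the Gram-Schmidt orthogonalization at the start of an iteration and $B'$ is the basis after the reduction step, then the Gram-Schmidt orthogonalization of $B'$ is identical to $B^*$. So the first step is simply to invoke Lemma 3.1 to conclude that $B'^* = B^*$, i.e., $\vec{b}_j^{\prime *} = \vec{b}_j^*$ for all $j$.

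Then I would finish by substituting into the definition:
\[
S(B') = \prod_{i=1}^n\prod_{j=1}^i \|\vec{b}_j^{\prime *}\|^2 = \prod_{i=1}^n\prod_{j=1}^i \|\vec{b}_j^*\|^2 = S(B).
\]

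There is essentially no obstacle here — the entire content of the lemma is packaged inside Lemma 3.1, and this statement is just the translation of that fact into the language of the size function. If one wanted a self-contained argument without citing Lemma 3.1, the alternative route would be to note that each operation $\vec{b}_i \leftarrow \vec{b}_i - \lfloor\mu_{i,j}\rceil\vec{b}_j$ is a unimodular column operation on each of the matrices $B_1, \ldots, B_n$ (for $B_i$ with $i' \ge i$ it changes one column by an integer combination of others, and for $B_{i'}$ with $i' < i$ it does nothing), so every $\det(B_{i'}^T B_{i'})$ is preserved and hence so is $S(B)$; but invoking Lemma 3.1 is cleaner and is the intended path.
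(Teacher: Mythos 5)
Your proof is correct and follows exactly the same route as the paper: invoke Lemma 3.1 to conclude that the reduction step leaves every Gram-Schmidt vector $\vec{b}_j^*$ unchanged, then observe that $S$ is defined purely in terms of the $\|\vec{b}_j^*\|$. The alternative unimodular-column-operation argument you sketch is also sound, but the cited-lemma version is what the paper uses.
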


\begin{proof}By Lemma 3.1, we know that the Gram-Schmidt orthogonalization process results in the same vectors when applied to either $B$ or $B'$. Since $S(B)$ and $S(B')$ can be defined purely in terms of the lengths of the Gram-Schmidt orthogonalized vectors, the claim follows.
\end{proof}

\begin{lem} If we take an integer basis $B$ and the basis $B'$ produced after performing one swap in $B$ during the LLL algorithm, then
\[
    S(B') < \delta S(B),
\]
where $\delta$ is the constant in the ordering condition of a $\delta$-LLL reduced basis.
\end{lem}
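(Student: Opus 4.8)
The plan is to track precisely how the "size" $S(B)=\prod_{\ell=1}^{n} D(\{\vec b_1,\dots,\vec b_\ell\})^2=\prod_{\ell=1}^{n} d_\ell$, where $d_\ell:=\prod_{j=1}^{\ell}\|\vec b_j^*\|^2$, changes under a single swap. Say the swap exchanges $\vec b_{i-1}$ and $\vec b_i$; by the algorithm this happens exactly because the ordering condition fails at index $i$, that is,
\[
\|\vec b_i^*\|^2 < \bigl(\delta-\mu_{i,i-1}^2\bigr)\,\|\vec b_{i-1}^*\|^2 .
\]
The first step is to note that almost every factor $d_\ell$ is unchanged: for $\ell\le i-2$ the first $\ell$ basis vectors are untouched, and for $\ell\ge i$ the collection $\{\vec b_1,\dots,\vec b_\ell\}$ is merely reordered, so $\det(B_\ell^{T}B_\ell)$ — and hence $d_\ell=D(\{\vec b_1,\dots,\vec b_\ell\})^2$ — is unaffected (a column permutation does not change this determinant; equivalently one can invoke Proposition 4.3). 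Likewise the Gram-Schmidt vectors $\vec b_1^*,\dots,\vec b_{i-2}^*$ are unchanged. Hence only $d_{i-1}$ moves, and
\[
\frac{S(B')}{S(B)}=\frac{d_{i-1}'}{d_{i-1}}=\frac{\|(\vec b_{i-1}^*)'\|^2}{\|\vec b_{i-1}^*\|^2},
\]
where $(\vec b_{i-1}^*)'$ is the new Gram-Schmidt vector in position $i-1$.

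The second step is to compute $(\vec b_{i-1}^*)'$. After the swap, position $i-1$ holds the old vector $\vec b_i$, orthogonalized against the unchanged $\vec b_1^*,\dots,\vec b_{i-2}^*$. Writing $\vec b_i=\sum_{j=1}^{i}\mu_{i,j}\vec b_j^*$ with $\mu_{i,i}=1$ (the Gram-Schmidt relation from Definition 2.2) and subtracting off the components along $\vec b_1^*,\dots,\vec b_{i-2}^*$ leaves
\[
(\vec b_{i-1}^*)'=\mu_{i,i-1}\,\vec b_{i-1}^*+\vec b_i^* .
\]
Since $\vec b_{i-1}^*\perp\vec b_i^*$, Pythagoras gives $\|(\vec b_{i-1}^*)'\|^2=\mu_{i,i-1}^2\|\vec b_{i-1}^*\|^2+\|\vec b_i^*\|^2$, so
\[
\frac{S(B')}{S(B)}=\mu_{i,i-1}^2+\frac{\|\vec b_i^*\|^2}{\|\vec b_{i-1}^*\|^2}.
\]
Substituting the failed ordering condition bounds the right-hand side by $\mu_{i,i-1}^2+(\delta-\mu_{i,i-1}^2)=\delta$, and strictly, because the ordering condition is violated with strict inequality. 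This yields $S(B')<\delta S(B)$.

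The only point requiring care — and the main obstacle — is the bookkeeping in the first step: confirming cleanly that $d_\ell$ is genuinely invariant for $\ell\ge i$ (permutation-invariance of $\det(B_\ell^{T}B_\ell)$, i.e. $D$ depends only on the spanned parallelepiped), and making sure the $\mu_{i,i-1}$ used in the computation is the value in force when the swap is triggered, namely after that iteration's reduction step. Since the reduction step leaves every $\vec b_j^*$ unchanged (Lemma 3.1) and hence $S$ unchanged (Lemma 4.5), it is harmless to take $B$ to be the post-reduction basis, and the computation above uses exactly that basis's Gram-Schmidt data.
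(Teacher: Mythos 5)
Your proposal is correct and follows essentially the same route as the paper: observe that a swap changes only the one partial-determinant factor of $S$ at the swap index, identify the new Gram-Schmidt vector in that slot as $\mu_{i,i-1}\vec{b}_{i-1}^* + \vec{b}_i^*$, apply Pythagoras, and substitute the failed ordering condition to get the ratio strictly below $\delta$. The only differences are cosmetic (you index the swapped pair as $(i-1,i)$ while the paper writes $(i,i+1)$) plus a welcome extra remark, absent from the paper, that Lemmas~3.1 and~4.5 justify carrying out the computation with the post-reduction Gram-Schmidt data.
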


\begin{proof}
Observe that when we swap two vectors $\vec{b}_i, \vec{b}_{i+1} \in B$ to create $B'$, the size $S(\{\vec{b}_1, \vec{b}_2, \ldots, \vec{b}_j\})$ is unchanged whenever $j \not= i$. This is because when $j < i$, none of the vectors in the basis are changed, and when $j > i$, the basis has the two vectors $\vec{b}_i, \vec{b}_{i+1}$ swapped, but the exact same vectors are still in the basis, so the determinant may become negative, but the squared determinant is unchanged. Therefore, the only difference between $S(B)$ and $S(B')$ comes from the sublattice with basis $\{\vec{b}_1, \vec{b}_2, \ldots, \vec{b}_i\}$. That is,
\[
\frac{S(B')}{S(B)} = \frac{\det(\{\vec{b}_1, \ldots, \vec{b}_{i-1}, \vec{b}_{i+1}\})^2}{\det(\{\vec{b}_1, \ldots, \vec{b}_{i-1}, \vec{b}_{i}\})^2} = \frac{||\vec{b}_{i+1}-\sum_{j=1}^{i-1}\mu_{i+1, j}\vec{b}_j^*||^2}{||\vec{b}_i^*||^2} = \frac{||\vec{b}_{i+1}^* + \mu_{i+1, i}\vec{b}_i^*||^2}{||\vec{b}_i^*||^2}.
\]
Since $\vec{b}_i^*$ is orthogonal to $\vec{b}_{i+1}^*$, by the Pythagorean theorem we get that
\[
\frac{||\vec{b}_{i+1}^* + \mu_{i+1, i}\vec{b}_i^*||^2}{||\vec{b}_i^*||^2} = \frac{||\vec{b}_{i+1}^*||^2 + ||\mu_{i+1, i}\vec{b}_i^*||^2}{||\vec{b}_i^*||^2}.
\]
\restoregeometry
We only swap two vectors in the LLL algorithm when the ordering condition is \textit{not} satisfied; therefore, we can use the fact that $(\delta - \mu_{i+1, i}^2)\Vert\vec{b}_i^*\Vert^2 > \Vert\vec{b}_{i+1}^*\Vert^2$ (using the indices assigned before the swap) to give us the inequality
\[
\frac{||\vec{b}_{i+1}^*||^2 + ||\mu_{i+1, i}\vec{b}_i^*||^2}{||\vec{b}_i^*||^2} < (\delta - \mu_{i+1, i}^2) + \mu_{i+1, i}^2 = \delta
\]
\[
\implies S(B') < \delta S(B).
\]
\end{proof}

This lemma induces the upper bound of $\delta < 1$ introduced earlier in the definition of a $\delta$-LLL reduced basis --- if we want to guarantee that the size will decrease with every swap, we can't have $\delta > 1$. If $\delta = 1$, then we only know that $S(B') < S(B)$ after every swap, so the algorithm could possibly run infinitely if $S(B') - S(B)$ gets arbitrarily small. Thus, we choose $\delta < 1$ to prevent this issue. We also choose $\delta > 1/4$ so that $\delta - \mu_{i+1, i}^2$ is strictly positive, because allowing it to become negative or zero guarantees the ordering condition to be satisfied in those situations, rendering it pointless.

We can now synthesize the lemmas to bound the number of iterations of the algorithm. Lemma 4.5 tells us that we do not need to worry about the size increasing or decreasing in the reduction step of the algorithm, so we can focus only on the swaps. By repeated application of Lemma 4.6, after $s$ swaps, the resulting basis $B^{(s)}$ will satisfy $S(B^{(s)}) < \delta^s S(B) $. Since we proved that $S(B^{(s)})$ is a positive integer for any $s$ in Lemma 4.4, it must always be greater than or equal to 1. Therefore, the number of swaps $s$ occurring over the course of the algorithm must always be small enough such that
\begin{align*}
1 &\leq S(B^{(s)}) < \delta^s S(B)\\
\implies s &< \log_{1/\delta}{S(B)}.
\end{align*}

This shows that not only is the algorithm guaranteed to terminate, but there are $O(\log S(B))$ swaps that will occur over the course of the algorithm. In an iteration where no swap is conducted, the index $i$ is incremented by 1 (refer to Section 3). So, the algorithm will not go more than $n$ consecutive iterations between swaps, or else $i$ would be greater than $n$ and the algorithm would terminate.

Therefore, since each swap occurs at most every $n$ iterations, the total number of iterations of the algorithm is $O(n \log S(B))$. The size $S(B),$ as defined by the product of Gram-Schmidt orthogonalized basis vectors, can be upper bounded using the following lemma:

\begin{lem}The basis vector $\vec{b}_i$ is at least as long as the Gram-Schmidt orthogonalized vector $\vec{b}_i^*.$
\end{lem}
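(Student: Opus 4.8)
The plan is to use the expression of each basis vector as a linear combination of Gram-Schmidt orthogonalized vectors, which was already derived in the proof of Lemma 4.2. Recall that $\vec{b}_i = \sum_{j=1}^{i}\mu_{i,j}\vec{b}_j^*$ with $\mu_{i,i}=1$, so we can isolate $\vec{b}_i^*$ and write $\vec{b}_i = \vec{b}_i^* + \sum_{j=1}^{i-1}\mu_{i,j}\vec{b}_j^*$.

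Next I would invoke the defining property of the Gram-Schmidt process: the vector $\vec{b}_i^*$ is orthogonal to every $\vec{b}_j^*$ with $j<i$, hence orthogonal to the entire sum $\sum_{j=1}^{i-1}\mu_{i,j}\vec{b}_j^*$. Applying the Pythagorean theorem to the decomposition of $\vec{b}_i$ into these two orthogonal components gives
\[
\|\vec{b}_i\|^2 = \|\vec{b}_i^*\|^2 + \left\|\sum_{j=1}^{i-1}\mu_{i,j}\vec{b}_j^*\right\|^2 \geq \|\vec{b}_i^*\|^2,
\]
and taking square roots yields $\|\vec{b}_i\| \geq \|\vec{b}_i^*\|$ as claimed.

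Honestly, there is no real obstacle here; the statement is essentially immediate once the orthogonal decomposition of $\vec{b}_i$ is written down. The only thing to be careful about is the boundary case $i=1$, where the sum is empty and we have equality $\vec{b}_1 = \vec{b}_1^*$, which is consistent. This lemma then feeds directly into bounding $S(B)$: since $\|\vec{b}_j^*\| \leq \|\vec{b}_j\|$ for every $j$, we get $S(B) = \prod_{i=1}^n\prod_{j=1}^i\|\vec{b}_j^*\|^2 \leq \prod_{i=1}^n\prod_{j=1}^i\|\vec{b}_j\|^2$, which can be controlled in terms of the (constant) bit size of the input basis vectors to conclude that $\log S(B)$ is polynomially bounded.
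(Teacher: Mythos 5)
Your proof is correct and takes essentially the same approach as the paper: both decompose $\vec{b}_i = \vec{b}_i^* + \sum_{j<i}\mu_{i,j}\vec{b}_j^*$ and use orthogonality of $\vec{b}_i^*$ to the remaining sum. You make the Pythagorean step explicit where the paper leaves it informal, but the argument is the same.
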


\begin{proof}This follows from the definition of a basis vector in terms of the orthogonalized basis vectors:
\[
\vec{b}_i  = \sum_{j=1}^{i}\mu_{i, j}\vec{b}_j^* = \vec{b}_i^* + \sum_{j=1}^{i-1}\mu_{i, j}\vec{b}_j^*.
\]
$\vec{b}_i$ has the same component as $\vec{b}_i^*$ in the direction of $\vec{b}_i^*$, but has (potentially zero-length) orthogonal components in other directions as well. Therefore, $||\vec{b}_i|| \geq ||\vec{b}_i^*||$.
\end{proof}

So, we get that
\begin{align*}
S(B) &= \prod_{i=1}^n \prod_{j=1}^i ||\vec{b}_j^*||^2\\
&\leq \max_{j\in [1..n]}||\vec{b}_j^*||^{2\cdot\frac{(n+1)n}{2}}\\
&\leq \max_{j\in [1..n]}||\vec{b}_j||^{n^2+n}.
\end{align*}

Next, we can bound the maximum length of an input basis vector. We assumed each entry can be represented in $c$ bits, and the bit length of the product of two numbers cannot exceed the sum of the bit length of each of the factors. In addition, when adding two numbers together, the result can only have at most one more bit than the largest addend. So, the norm squared can have bit length at most $2c + k - 1$, and therefore the norm itself could not possibly have size more than $2^{2c+k-1}$. We can conclude that the number of iterations of the LLL algorithm on an integer basis is
\[
O\left(n \log \max_{j\in [1..n]}||\vec{b}_j||^{n^2+n}\right) = O\left((n^3 + n^2)\log (2^{2c+k-1})\right) = O(k^4),
\]
which is polynomial in $k$  (note that this bound is not perfectly tight, but for this paper any polynomial bound will suffice).

Now, it remains to show that each iteration itself runs in polynomial time. First, we can observe that the Gram-Schmidt orthogonalization process only uses a polynomial number of calculations. This is because each orthogonalized vector $\vec{b}_i^*$ is computed using $\vec{b}_i$ and a linear combination of vectors $\vec{b}_j^*$ for $j < i$, each of which numbers no more than $n$. The coefficients in the linear combination are computed with operations on all $k$ entries of each of the vectors, and once an orthogonalized vector is computed, it does not change. So, the entire orthogonalized basis $B^*$ can be computed in $O(kn^2) = O(k^3)$ time, and we only need to do this once every iteration. The reduction step of the LLL algorithm takes a polynomial number of calculations since the projection coefficients $\mu_{i, j}$ for all $j < i$ are computed, of which there are no more than $n$, taking $O(k)$ time each. Finally, the swapping step requires an inequality check of vector lengths and potentially swapping two vectors, which are both polynomial time.

We have proven that for an integer basis, the LLL algorithm terminates in a polynomial number of iterations for $\delta \in (1/4, 1)$, and that each iteration takes a polynomial amount of time. Therefore, the entire algorithm is polynomially bounded for integer bases.

 Now, we can extend this fact from the integer case to the rational case. The important observation to make is as follows:
 
 For any rational basis $B = \{\vec{b}_1, \vec{b}_2, \ldots, \vec{b}_n\}$ with each $\vec{b}_i = \left(\frac{c_{i, 1}}{d_{i, 1}}, \frac{c_{i, 2}}{d_{i, 2}}, \cdots, \frac{c_{i, k}}{d_{i, k}}\right)$, $c_{i, j}, d_{i, j} \in \Z$, we can produce an analogous integer basis by multiplying each basis vector by the least common multiple of all the denominators. That is, we can define $d = \text{lcm}_{i \leq n, j \leq k}d_{i, j}$ and create a new basis $dB = \{d\vec{b}_1, d\vec{b}_2, \ldots, d\vec{b}_n\}$, which is an integer basis. We want to show that the LLL algorithm does the same operations on $B$ as it would do on $dB$, for which we have already proved the polynomial time bound.

\begin{lem} If the Gram-Schmidt orthogonalized basis of $B$ is equal to $\{\vec{b}_1^*, \vec{b}_2^*, \ldots, \vec{b}_n^*\}$, then the Gram-Schmidt orthogonalized basis of $dB$ is equal to $\{d\vec{b}_1^*, d\vec{b}_2^*, \ldots, d\vec{b}_n^*\}$.
\end{lem}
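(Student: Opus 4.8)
The plan is to prove this by induction on the index $i$, directly tracking how the Gram-Schmidt formula in Definition 2.2 transforms under the uniform scaling $\vec{b}_j \mapsto d\vec{b}_j$. The key structural observation — which I would isolate first — is that each projection coefficient $\mu_{i,j}$ is invariant under multiplying every vector involved by the same nonzero scalar, since scaling multiplies both the numerator $\langle \vec{b}_i, \vec{b}_j^*\rangle$ and the denominator $\langle \vec{b}_j^*, \vec{b}_j^*\rangle$ by $d^2$, which cancels. So the $\mu$'s computed for $dB$ will coincide with those for $B$, provided the orthogonalized vectors themselves scale as claimed.

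For the base case, the Gram-Schmidt process sets the first orthogonalized vector equal to the first basis vector, so the first orthogonalized vector of $dB$ is $d\vec{b}_1 = d\vec{b}_1^*$, as desired. For the inductive step, I would assume that the first $i-1$ orthogonalized vectors of $dB$ are $d\vec{b}_1^*, \ldots, d\vec{b}_{i-1}^*$. Then the projection coefficient of the $i$th vector of $dB$ onto the $j$th orthogonalized vector of $dB$ is
\[
\frac{\langle d\vec{b}_i, d\vec{b}_j^*\rangle}{\langle d\vec{b}_j^*, d\vec{b}_j^*\rangle} = \frac{d^2\langle \vec{b}_i, \vec{b}_j^*\rangle}{d^2\langle \vec{b}_j^*, \vec{b}_j^*\rangle} = \mu_{i, j}
\]
for each $j < i$. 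Plugging into the Gram-Schmidt formula, the $i$th orthogonalized vector of $dB$ is
\[
d\vec{b}_i - \sum_{j=1}^{i-1}\mu_{i, j}\,(d\vec{b}_j^*) = d\left(\vec{b}_i - \sum_{j=1}^{i-1}\mu_{i, j}\vec{b}_j^*\right) = d\vec{b}_i^*,
\]
which closes the induction and completes the proof.

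I do not expect a genuine obstacle here: the statement is essentially the homogeneity of the Gram-Schmidt construction, and the only thing requiring care is stating the scale-invariance of $\mu_{i,j}$ cleanly before running the induction, so that the inductive step is not circular (one needs the inductive hypothesis on $\vec{b}_1^*,\dots,\vec{b}_{i-1}^*$ precisely in order to know the relevant $\mu$'s agree). If anything, I would also remark in passing that this lemma is exactly what is needed to conclude that the LLL algorithm performs the identical sequence of reduction and swap operations on $B$ as on $dB$, since both the projection condition and the ordering condition are phrased entirely in terms of the $\mu_{i,j}$ and ratios $\|\vec{b}_{i+1}^*\|^2/\|\vec{b}_i^*\|^2$, all of which are unchanged by the scaling.
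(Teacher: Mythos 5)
Your proof is correct and follows essentially the same route as the paper's: induction on the index, with the key observation that the projection coefficients are invariant under uniform scaling because the $d^2$ cancels. Your version is slightly more careful than the paper's in making explicit that the scale-invariance of $\mu_{i,j}$ is established via the inductive hypothesis on the earlier orthogonalized vectors (rather than assumed outright), but the substance is identical.
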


\begin{proof}We can prove this by induction on the number of vectors. By definition, the first vector in the orthogonalized basis is $d\vec{b}_1 = d\vec{b}_1^*.$ Then, assuming the claim holds for the first $i$ orthogonalized vectors, the formula for the $(i+1)$th orthogonalized vector is given by
\[
(d\vec{b}_{i+1})^* = d\vec{b}_{i+1} - \sum_{j=1}^{i} \mu_{i+1, j}d\vec{b}^*_j.
\]
Observe that the projection coefficients $\mu_{i,j}$ are unchanged by this operation of multiplying by a scalar, since it affects both the numerator and the denominator and cancels out. Therefore,
\[
(d\vec{b}_{i+1})^* = d\left(\vec{b}_{i+1} - \sum_{j=1}^{i} \mu_{i+1, j}\vec{b}^*_j\right) = d\vec{b}_{i+1}^*.
\]
So, the claim holds for the entire basis by strong induction.
\end{proof}

Just as it has been proven for Gram-Schmidt orthogonalization, it can be easily verified that the remaining steps in the LLL algorithm are the exact same after scalar multiplication; that is, the same multiples of the same (scaled) vectors are being added to each other and the same vectors are being swapped as if there was no scalar multiplication. Furthermore, the two criteria for a $\delta$-LLL reduced basis still hold through scalar multiplication of the entire basis. So, the termination criteria being satisfied for $B$ is equivalent to the termination criteria being satisfied for $dB$. Therefore, if the algorithm terminates for $dB$, it must also terminate for $B$ in the same number of steps. Note that $S(dB) = d^{n^2+n}S(B)$, and that $d$ is a least common multiple of all the denominators, so its size is at most the product of all the denominators, which has bit length at most $cnk$ and thus size at most $2^{cnk}$. 
So, the calculation of the number of iterations changes to
\[O(n\log S(dB)) = O(n\log d^{n^2+n} + n\log S(B))
\]
\[
= O(n(n^2+n)\log 2^{cnk} + k^4) = O(k^5),
\]
which is still polynomial in $k$. So, it follows directly that the LLL algorithm terminates in a polynomial number of iterations for all rational bases. Also, note that Proposition 3.2 holds regardless of the type of numbers in the input basis, so the output will in fact be a $\delta$-LLL reduced basis in the rational case.

Now, it remains to argue that the extra number of bits involved in operations on rational numbers with both a numerator and a denominator do not affect the polynomial bound on the number of bit operations of the LLL algorithm when applied to a rational basis. We can also look to the basis $dB$ for this claim. Since $d$ has bit length at most $cnk$, when multiplying each vector by $d$, we increase the bit length of each numerator of each entry in each vector to $cnk + c$ at most, and the denominators are eliminated. Then, each standard operation takes $O(cnk+c) = O(k^2)$ bit operations instead of a constant number, but since we have polynomially many iterations, the total number of bit operations would then be $O(k^2)$ multiplied by some polynomial bounding the iterations, so the runtime of the LLL algorithm on $dB$ is still polynomial in $k$. Looking back to the original rational basis $B$, at any step of the algorithm we can express $B$ as $dB/d$; in this representation, each rational number in each vector has at most $cnk + c$ bits in the numerator, and at most $cnk$ bits in the denominator, for a total of $2cnk + c$ bits, which is still $O(k^2)$. Therefore, even if we input a rational basis, the number of bit operations that will be used in the algorithm is still polynomially bounded. So, we have proven the entire result:

\begin{thm}For all rational input bases, the LLL algorithm is guaranteed to terminate and output a $\delta$-LLL reduced basis in polynomial time for $\delta \in (1/4, 1)$. \qed
\end{thm}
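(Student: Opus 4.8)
The plan is to split the statement into its two assertions --- correctness of the output and the polynomial running time --- and to handle each by reducing the rational case to the integer case, using the results already assembled in Sections~3 and~4.

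For correctness there is essentially nothing new to do: Proposition~3.2 shows that any basis the algorithm outputs satisfies both the projection and ordering conditions, and that argument never used integrality of the entries, so it applies verbatim to a rational input. Hence the only real content is that the algorithm \emph{halts}, and halts after polynomially many bit operations.

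For the running time I would first nail down the integer case. Working with the size function $S(B) = \prod_{i=1}^n \prod_{j=1}^i \|\vec b_j^*\|^2$, Lemma~4.4 keeps $S$ a positive integer throughout the run, Lemma~4.5 says the reduction step leaves $S$ unchanged, and Lemma~4.6 says each swap multiplies $S$ by a factor strictly below $\delta<1$. Chaining these, after $s$ swaps one has $S < \delta^s S(B_0)$, and since $S\ge 1$ always, $s < \log_{1/\delta} S(B_0)$; because a swapless iteration increments $i$, no more than $n$ iterations can elapse between consecutive swaps before the loop exits, so the iteration count is $O(n\log S(B_0))$. Bounding $S(B_0)\le \max_j\|\vec b_j\|^{n^2+n}$ via Lemma~4.7 and bounding an input norm in terms of the per-entry bit size $c$ and the ambient dimension $k$ turns this into $O(k^4)$ iterations; together with the observation that a single iteration uses only polynomially many arithmetic operations (one Gram--Schmidt pass in $O(kn^2)$, the reduction step in $O(kn)$, the swap in $O(k)$), the integer case is polynomial.

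To pass to rationals, given $B$ I would scale by $d=\operatorname{lcm}$ of all denominators to get an integer basis $dB$. By Lemma~4.9 its Gram--Schmidt vectors are exactly $d\vec b_i^*$, the coefficients $\mu_{i,j}$ are unchanged, and so the algorithm performs the \emph{same} sequence of roundings, subtractions, comparisons, and swaps on $B$ as on $dB$; in particular it terminates on $B$ iff it terminates on $dB$, in the same number of steps. Since $S(dB)=d^{n^2+n}S(B)$ and $d$ has bit length at most $cnk$, the iteration count becomes $O(n\log S(dB))=O(k^5)$. The last and, I expect, most delicate point is the bit-size bookkeeping: a naive look at the reduction and swap steps suggests the rationals in play could grow uncontrollably, and the resolution is exactly the scaling trick --- in $dB$ every entry is an integer of bit length at most $cnk+c=O(k^2)$, so each arithmetic operation costs $O(k^2)$ bit operations rather than $O(1)$, and multiplying the polynomial operation count by this factor stays polynomial in $k$; equivalently, the working basis at any stage is $(dB)/d$, with numerators of $O(k^2)$ bits over a common denominator of $O(k^2)$ bits, so no exponential blow-up ever occurs. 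Combining the polynomial iteration bound, the polynomial per-iteration bit cost, and Proposition~3.2 yields the theorem.
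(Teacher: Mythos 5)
Your proposal matches the paper's own proof essentially step for step: the same size function $S(B)$, the same three lemmas controlling how $S$ behaves under reduction and swapping, the same $O(n\log S)$ iteration count refined to $O(k^4)$ for integers, the same passage to rationals by clearing denominators with $d=\operatorname{lcm}$ (which the paper records as Lemma 4.8, not 4.9), and the same bit-size bookkeeping via the $dB$ representation. No new ideas or genuinely different decomposition; this is a faithful reconstruction of the paper's argument.
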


\section{Application of LLL algorithm to SVP}

Now that we have discussed the algorithm and proven that it works as desired, we can reap the benefits of the reduced basis. In particular, we can produce a reasonable lower bound for the length of the shortest vector in any lattice. We must briefly prove a preliminary lemma.\medskip

\begin{lem}
    For any vector $\vec{v} \in \Lambda(B)$, there exists $\vec{b^*_i} \in B^*$ such that $ ||\vec{b^*_i}|| \leq ||\vec{v}||.$
\end{lem}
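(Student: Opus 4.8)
The plan is to expand $\vec{v}$ in the Gram--Schmidt basis $B^*$ and isolate the coefficient of the highest-index orthogonalized vector that appears. Write $\vec{v} = \sum_{i=1}^n c_i \vec{b}_i$ with all $c_i \in \Z$ (possible since $\vec{v} \in \Lambda(B)$), and let $m$ be the largest index with $c_m \neq 0$; we may assume $\vec{v} \neq \vec{0}$, so such an $m$ exists. First I would substitute the expansion $\vec{b}_i = \sum_{j=1}^i \mu_{i,j}\vec{b}_j^*$ (with $\mu_{i,i} = 1$), already used in the proof of Lemma 4.2 and Lemma 4.7, to rewrite
\[
\vec{v} = \sum_{i=1}^m c_i \sum_{j=1}^i \mu_{i,j}\vec{b}_j^* = \sum_{j=1}^m \left(\sum_{i=j}^m c_i \mu_{i,j}\right)\vec{b}_j^*.
\]

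The key observation is that the coefficient of $\vec{b}_m^*$ in this sum is exactly $c_m \mu_{m,m} = c_m$, since $i$ ranges only over $i = m$ when $j = m$. Because $c_m$ is a nonzero integer, $|c_m| \geq 1$. Now I would invoke orthogonality of the Gram--Schmidt vectors and apply the Pythagorean theorem (as in Lemma 4.6) to the displayed expansion:
\[
\|\vec{v}\|^2 = \sum_{j=1}^m \left(\sum_{i=j}^m c_i \mu_{i,j}\right)^2 \|\vec{b}_j^*\|^2 \geq c_m^2\,\|\vec{b}_m^*\|^2 \geq \|\vec{b}_m^*\|^2,
\]
where the first inequality drops all terms except $j = m$ and the second uses $|c_m| \geq 1$. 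Taking square roots gives $\|\vec{b}_m^*\| \leq \|\vec{v}\|$, so $\vec{b}_m^* \in B^*$ is the desired vector.

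The only real subtlety — the ``main obstacle'' such as it is — is noticing that the top coefficient is an honest nonzero integer rather than some fraction that could be smaller than $1$ in absolute value; this is what rules out the degenerate possibility $\|\vec{b}_m^*\| > \|\vec{v}\|$. Everything else (the change of basis, the orthogonality, Pythagoras) is routine and has already appeared in Section 4, so no new machinery is needed.
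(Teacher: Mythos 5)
Your proposal is correct and follows essentially the same route as the paper: expand $\vec{v}$ in the Gram--Schmidt basis, identify the top nonzero integer coefficient $c_m$, and use orthogonality to conclude $\|\vec{v}\| \geq |c_m|\,\|\vec{b}_m^*\| \geq \|\vec{b}_m^*\|$. Your version is slightly more explicit in writing out the full Pythagorean expansion, but the key observations (integrality of $c_m$ forcing $|c_m|\geq 1$, and $\vec{b}_m^*$ being orthogonal to every other term) are identical to the paper's.
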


\begin{proof}We can express $\vec{v}$ as a linear combination of basis vectors, and then express each of those basis vectors as a linear combination of Gram-Schmidt orthogonalized vectors to get the desired conclusion. That is,
\begin{align*}
\vec{v} &= \sum_{i=1}^n z_i \vec{b}_i,\,\,\,\,z_i \in \Z \\
&= \sum_{i=1}^n \sum_{j=1}^i z_i \mu_{i, j}\vec{b}^*_j
\end{align*}
Let $m$ be the greatest value of $i$ for which $z_i$ is non-zero. Then, the above is equal to
\begin{align*}
    &\sum_{i=1}^m \sum_{j=1}^{i} z_i\mu_{i, j}\vec{b}_j^*.
\end{align*}
In this sum, $z_m\vec{b}_m^*$ is the only term with a multiple of $\vec{b}_m^*$, so all the other vectors in the sum are orthogonal to it. This means that adding on some multiple of those vectors strictly increases the length of the resultant vector. Therefore, since $z_m$ is a non-zero integer, it has absolute value greater than or equal to 1, and we can conclude
\[
||\vec{v}|| \geq ||z_m\vec{b}^*_m|| \geq ||\vec{b}^*_m||,
\]
so $\vec{b}^*_m$ satisfies the claim.
\end{proof}

\begin{thm}If we have a lattice $\Lambda(B)$ with $\delta$-LLL reduced basis $\{\vec{b}_1, \vec{b}_2, \ldots, \vec{b}_n\}$, then 
\[
     \left(\delta - \frac{1}{4}\right)^{\frac{n-1}{2}}\Vert\vec{b}_{1}\Vert \leq \Vert\lambda(B)\Vert,
\]

where $\lambda(B)$ denotes the shortest vector in $\Lambda(B)$.
\end{thm}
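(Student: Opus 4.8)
The plan is to combine Lemma 5.1 with the two defining conditions of a $\delta$-LLL reduced basis. Lemma 5.1 tells us that the shortest nonzero lattice vector $\lambda(B)$ satisfies $\|\lambda(B)\| \geq \|\vec{b}_i^*\|$ for \emph{some} index $i$, hence certainly $\|\lambda(B)\| \geq \min_{1 \le i \le n} \|\vec{b}_i^*\|$. So it suffices to show that every Gram-Schmidt vector $\vec{b}_i^*$ is not much shorter than $\vec{b}_1$; more precisely, I will prove $\|\vec{b}_i^*\|^2 \geq (\delta - \tfrac14)^{i-1}\|\vec{b}_1^*\|^2$ for all $i$, and then recall $\vec{b}_1^* = \vec{b}_1$.

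The key step is to feed the projection condition into the ordering condition. For each $i < n$ the ordering condition gives $(\delta - \mu_{i+1,i}^2)\|\vec{b}_i^*\|^2 \leq \|\vec{b}_{i+1}^*\|^2$, and the projection condition gives $|\mu_{i+1,i}| \leq 1/2$, so $\mu_{i+1,i}^2 \leq 1/4$ and therefore $\delta - \mu_{i+1,i}^2 \geq \delta - 1/4 > 0$ (here is where $\delta > 1/4$ is used). Substituting yields the clean recursion $(\delta - \tfrac14)\|\vec{b}_i^*\|^2 \leq \|\vec{b}_{i+1}^*\|^2$, and an easy induction on $i$ produces $\|\vec{b}_i^*\|^2 \geq (\delta-\tfrac14)^{i-1}\|\vec{b}_1^*\|^2$ as claimed.

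To finish, I take the minimum over $i \in \{1,\dots,n\}$. Since $\delta \in (1/4,1)$ we have $0 < \delta - 1/4 < 1$, so $(\delta - \tfrac14)^{i-1}$ is decreasing in $i$ and the smallest value of the bound occurs at $i = n$. Thus $\min_i \|\vec{b}_i^*\|^2 \geq (\delta - \tfrac14)^{n-1}\|\vec{b}_1\|^2$, i.e. $\min_i \|\vec{b}_i^*\| \geq (\delta - \tfrac14)^{(n-1)/2}\|\vec{b}_1\|$, and combining with the inequality $\|\lambda(B)\| \geq \min_i \|\vec{b}_i^*\|$ from Lemma 5.1 gives exactly the desired statement.

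I do not expect a genuine obstacle here; the argument is short once Lemma 5.1 is in hand. The only points requiring a moment of care are (i) correctly chaining the per-step bound into $(\delta-\tfrac14)^{i-1}$ rather than some other exponent, and (ii) noting that because $\delta - 1/4 < 1$ the worst case is the \emph{last} index $i = n$, which is what produces the exponent $\tfrac{n-1}{2}$ in the final bound.
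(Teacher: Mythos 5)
Your argument matches the paper's proof: both combine the projection and ordering conditions to obtain the per-step bound $(\delta - \tfrac14)\|\vec{b}_i^*\|^2 \leq \|\vec{b}_{i+1}^*\|^2$, chain it to relate $\|\vec{b}_1\| = \|\vec{b}_1^*\|$ to $\min_i \|\vec{b}_i^*\|$, and finish with Lemma 5.1. The only cosmetic difference is that you state the inequality in the form $\|\vec{b}_i^*\|^2 \geq (\delta - \tfrac14)^{i-1}\|\vec{b}_1\|^2$ while the paper writes the reciprocal version, and you make explicit the observation that $\delta - \tfrac14 < 1$ forces the worst case at $i = n$, which the paper leaves implicit.
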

\begin{proof}We have that $(\delta - \mu_{i+1, i}^2)\Vert\vec{b}_i^*\Vert^2 \leq \Vert\vec{b}_{i+1}^*\Vert^2$ from the reduced basis condition (2), which implies that
\[
\Vert\vec{b}_i^*\Vert^2 \leq \frac{\Vert\vec{b}_{i+1}^*\Vert^2}{\delta - \mu_{i+1, i}^2} \leq \frac{\Vert\vec{b}_{i+1}^*\Vert^2}{\delta - \frac{1}{4}}
\]
By repeated application of that inequality, we get that
\[
\Vert\vec{b}_1\Vert^2 = \Vert\vec{b}_1^*\Vert^2 \leq \frac{\Vert\vec{b}_{i}^*\Vert^2}{(\delta - \frac{1}{4})^{i-1}} \leq \frac{\Vert\vec{b}_{i}^*\Vert^2}{(\delta - \frac{1}{4})^{n-1}}
\]
for all $i \leq n$. So, we can see that
\[
\Vert\vec{b}_{1}\Vert^2 \leq \frac{\min_{i\leq n}\Vert\vec{b}_{i}^*\Vert^2}{(\delta - \frac{1}{4})^{n-1}}\]
\[\implies \left(\delta - \frac{1}{4}\right)^{\frac{n-1}{2}}\Vert\vec{b}_{1}\Vert \leq \min_{i\leq n} \Vert\vec{b}_{i}^*\Vert,\]
and by Lemma 5.1, since it applies to any vector in $\Lambda(B)$, and certainly $\lambda(B) \in \Lambda(B)$, we conclude
\[
\left(\delta - \frac{1}{4}\right)^{\frac{n-1}{2}}\Vert\vec{b}_{1}\Vert \leq \Vert\lambda(B)\Vert.
\]
\end{proof}

So, the LLL algorithm allows us to succinctly bound the length of the shortest vector in any lattice with rational basis by using a multiple of the first vector in the $\delta$-LLL reduced basis. While the coefficient is exponential in $n$, which isn't necessarily that tight of a bound, the ability to produce the reduced basis in polynomial time makes it one of the best available options for many applications.

\section{Further applications of the LLL algorithm}

The LLL algorithm can be applied to a wide array of problems, and for many of these problems, the common theme is that we are aiming to find some object that is equal to or close to zero. We can apply LLL in these cases by constructing a lattice where each vector in the lattice represents one of the objects that we want to consider, such that shortness of a vector is equivalent to the object being close to zero. With such a construction, the problem is then reduced to finding the shortest vector in the lattice. A simple example will help make this idea more concrete.

\textbf{6.1. Approximating minimal polynomials.} An algebraic number (over $\Z$) is a number that is a root of a polynomial with integer coefficients, like $3, \frac{1}{2}, \sqrt{2},$ or the golden ratio $\varphi$ (satisfying $x^2 - x - 1 = 0$). The minimal polynomial of an algebraic number $\alpha$ is the monic polynomial with integer coefficients of lowest degree that has $\alpha$ as a root (e.g. $x^2 - x - 1$ is the minimal polynomial of $\varphi$). The LLL algorithm allows us to quickly find the minimal polynomial for some algebraic number $\alpha$ if we are given a number close to $\alpha$. For example, take $\alpha = \sqrt{2} \approx 1.414$. $\sqrt{2}$ has minimal polynomial $x^2 - 2$, so we hope to produce this polynomial with the LLL algorithm. We can construct a lattice with basis
\[
\left\{\begin{pmatrix}1\\0\\0\\1000\cdot1.414^2
\end{pmatrix}, \begin{pmatrix}0\\1\\0\\1000\cdot1.414
\end{pmatrix}, \begin{pmatrix}0\\0\\1\\1000
\end{pmatrix}
\right\}.
\]
The reason we choose this basis is because a general vector in its lattice has the form
\[
a\begin{pmatrix}1\\0\\0\\1000\cdot1.414^2
\end{pmatrix} + b\begin{pmatrix}0\\1\\0\\1000\cdot1.414
\end{pmatrix} + c\begin{pmatrix}0\\0\\1\\1000
\end{pmatrix}
\]
\[
= \begin{pmatrix}a\\b\\c\\1000(a\cdot1.414^2 + b\cdot1.414 + c)
\end{pmatrix}
\]
for $a, b, c \in \Z$. Each vector then corresponds to the polynomial $ax^2 + bx + c$, where the first three elements in the vector represent the coefficients, and the fourth element represents the polynomial evaluated at our approximation of 1.414, scaled up by a factor of 1000. The only way for such a vector to have a small length is when $1000(a\cdot1.414^2 + b\cdot1.414 + c)$ is close to zero, so $a\cdot1.414^2 + b\cdot1.414 + c \approx 0$, implying that 1.414 is close to a root of the polynomial $ax^2 + bx + c$. Therefore, finding a short vector in the lattice is equivalent to finding a polynomial of degree at most 2 such that a number close to 1.414 is a root of that polynomial, and furthermore, that the coefficients of that polynomial are relatively small. That describes the minimal polynomial of $\sqrt{2}$, or at least some polynomial with small coefficients in which $\sqrt{2}$ evaluates to a very small number.

Using LLL on this basis with $\delta = 3/4$, we get the reduced basis
\[
\left\{\begin{pmatrix}-1\\0\\2\\0.604
\end{pmatrix}, \begin{pmatrix}10\\-12\\-3\\25.96
\end{pmatrix}, \begin{pmatrix}-15\\29\\-11\\15.06
\end{pmatrix}
\right\}.
\]
Sure enough, the first three elements of the first vector in this basis exactly give us the (negative) coefficients of the minimal polynomial $x^2 - 2$. The fourth element of the first vector is so small because it is equal to $1000(-1.414^2 + 2) \approx 1000(-(\sqrt{2})^2 + 2) = 0$. Note that we need the factor of 1000 for this method to work, because it forces the algorithm to ``prioritize'' minimizing the value of the polynomial in the fourth element before the coefficients are minimized, since there is no way a vector in the lattice can be small without the polynomial in the fourth element being very close to zero. Here, 1000 is somewhat arbitrary, but if a very small factor is used, it is possible that a polynomial with very small coefficients and a root close to $\sqrt{2}$ --- but not equal to it --- could be produced, which we want to avoid.

In general, this process works for any algebraic number $\alpha$ if we are given an approximation $\alpha' \approx \alpha,$ for any degree of minimal polynomial, although we must have a guess for the degree beforehand to set the size of the basis. If we want to search for a minimal polynomial of $\alpha$ of degree $d-1$, then letting $\vec{e_1}, \vec{e_2}, \ldots, \vec{e_d}$ be the standard basis vectors in $\R^d$, we can append an extra element to each of the standard basis vectors to construct the lattice basis
\[
\left\{\begin{pmatrix}\vec{e_1}\\10^k(\alpha')^{d-1}
\end{pmatrix}, \begin{pmatrix}\vec{e_2}\\10^k(\alpha')^{d-2}
\end{pmatrix},\cdots, \begin{pmatrix}\vec{e_d}\\10^k
\end{pmatrix}
\right\}
\]
for some suitably large $k$. Then, after applying LLL to this basis, the first vector in the reduced basis will correspond to the minimal polynomial of $\alpha$, or something close to it.

You can also apply this method to gather evidence for whether any number is transcendental (i.e. not algebraic). One can construct a lattice in exactly the same way, substituting the number to be tested as $\alpha,$ and increasing the size of the basis to increase the range of possible degrees of polynomials spanned. If the LLL algorithm does not output a reduced basis with a vector that has a 0 as its last element, then none of the vectors represent a polynomial for which $\alpha$ is exactly a root. This doesn't constitute a proof, because it doesn't \textit{guarantee} that $\alpha$ does not satisfy any integer polynomial of the degree specified, but it shows that it is highly unlikely, with the probability increasing as the factor of 10 in the fourth element is increased. 

\textbf{6.2. Further reading.} While most are outside the scope of this paper, there are many other applications of LLL. Some involve constructing complex lattices and using LLL on those lattices like in the example above, while others take advantage of the hardness of the the SVP, as well as the bound on the SVP that LLL induces. These applications include:
\begin{itemize}[topsep=0pt]
    \item Factoring polynomials with rational coefficients, which, in fact, was the title of the original paper by Lenstra, Lenstra, and Lovász that introduced the LLL algorithm [3].
    \item Using continued fractions to approximate real numbers with rational numbers [6].
    \item Finding small roots of integer polynomials mod $N$ (Coppersmith's method) [7][8].
    \item Partially inverting RSA encryption (uses Coppersmith's method) [7][8].
    \item Showing difficulty of solving the learning with errors (LWE) and ring learning with errors (RLWE) problems, even for quantum computers. This problem is used as the foundation for many modern encryption schemes, particularly homomorphic encryption, because it is presumed to be ``quantum-safe'' [9][10]. 
    \item Disproof of Merten's conjecture, which proposes that \[\sum_{k=1}^n\sum\zeta_k < \sqrt{n}\] for all $n > 1$, where the second sum is over all primitive $k$th roots of unity (i.e. $(\zeta_k)^k = 1, (\zeta_k)^m \not=1$ for $m < k$) [11].
    
\end{itemize}

    Several of these applications use the same idea of trying to find a small or zero object, constructing a lattice where shortness of a vector corresponds to smallness of the object, and then obtaining a short vector in that lattice, which must necessarily result in the found object being close to, or equal to, zero. This method was used in 6.1, where we wanted to find a polynomial, both with small coefficients and a small output at a particular $\alpha,$ so we constructed a lattice where both outcomes would be equivalent to finding a short vector. The LLL algorithm gives us a polynomial time method to search for an approximate solution to each of these problems, so long as they can be reframed in terms of short vectors in lattices.

\pagebreak

\end{document}